\documentclass[12pt]{amsart}
\usepackage{amscd,amssymb,amsthm,amsmath,amssymb,mathrsfs,enumerate,apptools}
\usepackage[matrix,arrow,curve]{xy}
\usepackage[margin=1.8cm]{geometry}

\usepackage{xcolor}
\usepackage{pdflscape}
\usepackage{graphicx}% http://ctan.org/pkg/graphicx
\usepackage{array}% http://ctan.org/pkg/array
\usepackage[matrix,arrow,curve]{xy}
\sloppy\pagestyle{plain}
\usepackage{longtable}

\newcommand{\DR}{\mathbb{R}} %real numbers

\newcommand{\DZ}{\mathbb{Z}}

\newcommand{\DP}{\mathbb{P}}%projective space
\newcommand{\DA}{\mathbb{A}}

\newcommand{\MC}{\mathcal{C}}

\AtAppendix{\counterwithin{lemma}{section}}
\AtAppendix{\counterwithin{corollary}{section}}

\newtheorem{lemma}{Lemma}
\newtheorem{corollary}{Corollary}
\newtheorem{conjecture}{Conjecture}

\theoremstyle{definition}
\newtheorem{example}{Example}

\theoremstyle{remark}
\newtheorem{remark}{Remark}

\theoremstyle{maintheorem*}
\newtheorem*{theorem*}{Theorem}
\newtheorem*{maintheorem*}{Main Theorem}

\sloppy\pagestyle{plain}

%\allowdisplaybreaks

\author{Elena Denisova}

\address{\emph{Elena Denisova}
		\newline
		\textnormal{University of Edinburgh,  Edinburgh, Scotland}
		\newline
		\textnormal{\texttt{s2223072@ed.ac.uk}}}

\title{K-stability of Fano 3-folds of Picard rank 3 and degree 20}

\thanks{Throughout this paper, all varieties are assumed to be projective and defined over~$\mathbb{C}$.}

\begin{document}

\begin{abstract}
We prove K-stability of smooth Fano 3-folds of Picard rank 3 and degree 20 that satisfy very explicit generality condition.
\end{abstract}

\maketitle

\tableofcontents

\section{Introduction}
\label{section:intro}
\noindent Let $S=\mathbb{P}^1\times\mathbb{P}^1$, let $C$ be a~smooth curve in $S$ of degree $(5,1)$,
and let $\eta\colon C\to\mathbb{P}^1$ be the~morphism induced by the projection $S\to\mathbb{P}^1$ to the first factor.
Then $\eta$ is a finite morphism of degree five, and we may assume that the~points $([1:0],[0:1])$ and $([0:1],[1:0])$ are among its ramifications points.
This assumption implies that the curve $C$ is given~by
$$
u\big(x^5+a_1x^{4}y+a_2x^{3}y^2+a_3x^{2}y^3\big)=v\big(y^5+b_1xy^4+b_2x^{2}y^3+b_3x^{3}y^2\big)
$$
for some $a_1$, $a_2$, $a_3$, $b_1$, $b_2$, $b_3$, where $([u:v],[x:y])$ are coordinates on $S$.
Note that the~ramification index of the~point  $([1:0],[0:1])$ can be computed as follows:
$$
\left\{\aligned
&2\ \text{if $a_3\ne 0$},\\
&3\ \text{if $a_3=0$ and $a_2\ne 0$},\\
&4\ \text{if $a_3=a_2=0$ and $a_1\ne 0$},\\
&5\ \text{if $a_3=a_2=a_1=0$}.
\endaligned
\right.
$$
Likewise, we can compute the~ramification index of the~point  $([0:1],[1:0])$.
We may assume~that
\begin{itemize}
\item $([1:0],[0:1])$ has the~largest ramification index among all ramifications points of $\eta$
\item the~ramification index of the~point $([0:1],[1:0])$ is the~second largest index.
\end{itemize}
If both these indices are $5$, then $a_1=a_2=a_3=b_1=b_2=b_3=0$, the morphism $\eta$ does not have other ramification points,
and the~equation of the~curve $C$ simplifies as
$$
ux^5=vy^5.
$$
In this case, we have $\mathrm{Aut}(S,C)\cong\mathbb{C}^\ast\rtimes\DZ/2\DZ$.
In all other cases, this group is finite \cite[Corollary~2.7]{CheltsovShramovPrzyjalkowski}.
\noindent Now, we consider embedding $S\hookrightarrow\mathbb{P}^1\times\mathbb{P}^2$ given by
$$
\big([u:v],[x:y]\big)\mapsto\big([u:v],[x^2:xy:y^2]\big),
$$
and identify $S$ and $C$ with their images in $\mathbb{P}^1\times\mathbb{P}^2$.
Let $\pi\colon X\to\mathbb{P}^1\times\mathbb{P}^2$ be the~blow up of the~curve~$C$. 
Then $X$ is a~smooth Fano threefold in the deformation family \textnumero~3.5 in the Mori--Mukai list
and every smooth member of this family can be obtained in this way. %We denote $\widetilde{S}$ to be the~proper transform on $X$ of the~surface $S$,  $\mathrm{pr}_1: \DP^1\times \DP^2 \to \DP^1$ and $\mathrm{pr}_2: \DP^1\times \DP^2 \to \DP^2$ be the projections to the first and the second factors, respectively, $H_1=(\mathrm{pr}_1\circ\pi)^*(\mathcal{O}_{\mathbb{P}^1}(1))$ and  $H_2=(\mathrm{pr}_2\circ\pi)^*(\mathcal{O}_{\mathbb{P}^2}(1))$.
We know from \cite[Section 5.14]{Book}, that
\begin{itemize}
\item $X$ is K-stable if the numbers $a_1$, $a_2$, $a_3$, $b_1$, $b_2$, $b_3$ are general enough,
\item $X$ is K-polystable if $a_1=a_2=a_3=b_1=b_2=b_3=0$.
\end{itemize}
However, for some $a_1$, $a_2$, $a_3$, $b_1$, $b_2$, $b_3$, the threefold $X$ is not K-polystable.

\begin{example}
\label{example:not-K-polystable}
If $(a_1,a_2,a_3)=(0,0,0)\ne(b_1,b_2,b_3)$, then $X$ is not K-polystable \cite[Lemma 7.6]{Book}.
\end{example}
\noindent Note also that it follows from the~proof of \cite[Lemma~8.7]{CheltsovShramovPrzyjalkowski} that $\mathrm{Aut}(X)\cong\mathrm{Aut}(S,C)$.
In particular, we conclude the group $\mathrm{Aut}(X)$ is finite if and only if  $(a_1,a_2,a_3,b_1,b_2,b_3)\ne(0,0,0,0,0,0)$.
In this case, the threefold $X$ is K-polystable if and only if it is $K$-stable.
Moreover, we have

\begin{conjecture}[{\cite{Book}}]
\label{conjecture:main}
The Fano threefold $X$ is $K$-stable if and only if  $(a_1,a_2,a_3)\ne (0,0,0)$.
\end{conjecture}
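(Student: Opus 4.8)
The plan is to prove the two implications separately. For the \emph{only if} direction, suppose $(a_1,a_2,a_3)=(0,0,0)$. If in addition $(b_1,b_2,b_3)=(0,0,0)$, then $\mathrm{Aut}(X)\cong\mathbb{C}^{\ast}\rtimes\DZ/2\DZ$ is infinite, so $X$ is not K-stable, although it is K-polystable by \cite[Section 5.14]{Book}. If $(b_1,b_2,b_3)\neq(0,0,0)$, then $X$ is not even K-polystable by Example~\ref{example:not-K-polystable}, hence again not K-stable. So the whole content lies in the \emph{if} direction.

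Assume from now on that $(a_1,a_2,a_3)\neq(0,0,0)$. Then $\mathrm{Aut}(X)$ is finite and K-stability of $X$ is equivalent to K-polystability, so by the valuative criterion (Fujita--Li, Blum--Jonsson) it suffices to prove that the stability threshold satisfies $\delta(X)>1$, i.e. $A_X(\mathbf{F})/S_X(\mathbf{F})>1$ for every prime divisor $\mathbf{F}$ over $X$. I would establish this by the Abban--Zhuang method, exactly in the style of \cite{Book}. Writing $\delta(X)=\inf_{p\in X}\delta_p(X)$, fix a point $p\in X$; depending on where $p$ lies, choose a surface $S\subset X$ through $p$ on which to run the inductive flag estimate. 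The natural candidates for $S$ are: the $\pi$-exceptional divisor $E_\pi$ (a $\mathbb{P}^1$-bundle over $C\cong\mathbb{P}^1$); the strict transforms of divisors coming from the two Mori fibrations on $X$ --- the del Pezzo fibration $X\to\mathbb{P}^1$ of degree $4$ and the conic bundle $X\to\mathbb{P}^2$ with discriminant the conic $\{[x^2:xy:y^2]\}$ --- including the strict transforms of the hyperplane sections of $\mathbb{P}^1\times\mathbb{P}^2$ tangent to that conic at the two distinguished points; and, for the most delicate points, certain special surfaces singled out by the ramification data of $\eta$. Having fixed $S$, pick a curve $Z\subset S$ through $p$ (a ruling of $S$, or $E_\pi\cap S$), compute the Zariski decomposition of $(-K_X-tS)\big|_{S}$ and then of the positive part minus $uZ$ on $S$, and assemble the Abban--Zhuang bound
\[
\delta_p(X)\ \ge\ \min\left\{\frac{A_X(S)}{S_X(S)},\ \frac{A_S(Z)}{S\big(W^S_{\bullet,\bullet};Z\big)},\ \frac{A_Z(p)}{S\big(W^{S,Z}_{\bullet,\bullet,\bullet};p\big)}\right\}.
\]
The goal is to show that the right-hand side exceeds $1$ for every $p$.

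Concretely, I would stratify $X$ by the position of $p$ relative to $E_\pi$, to the fibres of $X\to\mathbb{P}^1$ over the two distinguished points of the base, and to the loci swept out by lines and conics of $\mathbb{P}^1\times\mathbb{P}^2$ contained in $X$; for $p$ in general position the bound above clears $1$ with any reasonable flag, and the work concentrates on the finitely many remaining strata. The explicit generality condition announced in the abstract is precisely what keeps this list finite and the Zariski decompositions uniform: it forces the distinguished surfaces above to be irreducible with at worst mild singularities, prevents $C$ from acquiring extra symmetry, and rules out any unexpected line or conic of $\mathbb{P}^1\times\mathbb{P}^2$ lying on $X$.

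The hard part will be the regime where $(a_1,a_2,a_3)\neq(0,0,0)$ is ``small'' --- for instance $a_3=a_2=0$, $a_1\neq0$, with $(b_1,b_2,b_3)$ arbitrary --- because then $X$ degenerates toward the K-polystable member (the blow-up of $\{ux^5=vy^5\}$), near which, by Example~\ref{example:not-K-polystable}, genuine destabilizing divisors already exist on the locus $(a_1,a_2,a_3)=(0,0,0)$. Consequently the $\beta$-invariant $\beta(\mathbf{F})=A_X(\mathbf{F})-S_X(\mathbf{F})$ of the relevant candidate $\mathbf{F}$ is positive but small for $X$ itself, the flag estimates become tight, and one must choose $S$ and $Z$ adapted to that candidate --- most plausibly $S$ a strict transform of a hyperplane section of $\mathbb{P}^1\times\mathbb{P}^2$ tangent to the discriminant conic at $([1:0],[0:0:1])$ and $Z=E_\pi\cap S$ --- and track the dependence on the surviving coefficients carefully, using the generality hypothesis to bound the error terms. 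I expect that near-polystable analysis to be the crux; dispensing with the generality hypothesis so as to obtain Conjecture~\ref{conjecture:main} in full would require a separate, finer treatment of the remaining special configurations.
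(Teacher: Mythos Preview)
The statement you are attempting to prove is a \emph{conjecture} in the paper, not a theorem: the paper does not contain a proof of it. What the paper actually proves is the strictly weaker Main Theorem, covering only the case where every ramification point of $\eta$ has index two (equivalently, every singular fibre of $\phi_1$ has only $\mathbb{A}_1$ singularities). The paper's own Remark at the end of Section~\ref{section:proof} shows that its method---Abban--Zhuang with the del Pezzo fibre $\overline{T}$ as the surface flag---already fails once an $\mathbb{A}_2$ fibre appears, because then $S(W^{\overline{T}}_{\bullet,\bullet};\overline{C})>A_{\overline{T}}(\overline{C})$ for a suitable curve $\overline{C}$ through the singular point. So there is no ``paper's proof'' to compare your proposal against.

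Your \emph{only if} direction is correct and complete. Your \emph{if} direction is not a proof but a strategy sketch, and the gap is exactly where you locate it yourself: you say the ``hard part'' is the regime with high ramification index and that it ``would require a separate, finer treatment,'' but you do not supply that treatment. Concretely, for ramification indices $3$ and $4$ (fibres of type $\mathbb{A}_2$, $\mathbb{A}_3$) no flag is known that makes the Abban--Zhuang inequality clear $1$; the paper's computation in the appendix shows the obvious flag fails, and your alternative surfaces (hyperplane sections tangent to the discriminant conic, the exceptional divisor $E_\pi$) are plausible candidates but you have not carried out any of the Zariski decompositions or verified a single inequality for them. Note also that the paper's actual flag is the del Pezzo fibre $\overline{T}$ of $\phi_1$, which you do not list among your candidates; even in the $\mathbb{A}_1$ case the computations are delicate and occupy the entire appendix. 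Until the $\mathbb{A}_2$ and $\mathbb{A}_3$ cases are handled with explicit flags and explicit estimates, the conjecture remains open.
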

\noindent  Geometrically, this conjecture says that the following two conditions are equivalent:
\begin{enumerate}
\item the threefold $X$ is K-stable,
\item the~morphism $\eta\colon C\to\mathbb{P}^1$ does not have ramification points of ramification index~five.
\end{enumerate}
The goal of this paper is to prove the following (slightly weaker) result:

\begin{theorem*}
\label{theorem:main}
If all ramification points of $\eta$ have ramification index~two, then $X$ is K-stable.
\end{theorem*}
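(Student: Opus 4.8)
\emph{Strategy.} We prove that the stability threshold satisfies $\delta(X)>1$; by the valuative criterion for K-stability and the Abban--Zhuang method (see \cite{Book}) it is enough to show that $\delta_p(X)>1$ for every point $p\in X$. Write $E$ for the $\pi$-exceptional divisor and $H_1$, $H_2$ for the pullbacks to $X$ of the hyperplane classes of $\mathbb{P}^1$ and $\mathbb{P}^2$, so that $-K_X=2H_1+3H_2-E$ and $(-K_X)^3=20$. Let $\phi=\mathrm{pr}_1\circ\pi\colon X\to\mathbb{P}^1$, and let $q\subset\mathbb{P}^2$ be the image of the Veronese map $[x:y]\mapsto[x^2:xy:y^2]$, a smooth conic. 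Since $S=\mathbb{P}^1\times\mathbb{P}^1$ is identified with $\mathbb{P}^1\times q$ and $C\subset\mathbb{P}^1\times q$, the fibre of $\phi$ over a point of $\mathbb{P}^1$ is the blow up of $\mathbb{P}^2$ at the length-$5$ subscheme $C\cap(\{\cdot\}\times\mathbb{P}^2)$, which lies on $q$; it is therefore (the minimal resolution of) a del Pezzo surface of degree $4$. Under the hypothesis that all ramification points of $\eta$ have index $2$, this subscheme is reduced away from the eight points of $\mathbb{P}^1$ lying below the branch points of $\eta$, and over those eight points it consists of three reduced points together with one length-$2$ point; hence the fibre $F$ of $\phi$ is a smooth del Pezzo surface of degree $4$ except for eight special fibres, each a weak del Pezzo surface of degree $4$ carrying a single $(-2)$-curve.

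\emph{Step 1: reduction to a del Pezzo surface.} Fix $p\in X$ and let $F$ be the fibre of $\phi$ through $p$; then $F\sim H_1$ and
$$
-K_X-tF=(2-t)H_1+3H_2-E .
$$
Let $\widetilde S:=2H_2-E$, the strict transform of $\mathbb{P}^1\times q$. A direct computation of the Zariski decomposition shows that $-K_X-tF$ is nef for $t\in[0,1]$ with $\mathrm{vol}(-K_X-tF)=20-12t$, while for $t\in[1,2]$ its negative part is $(t-1)\widetilde S$ and $\mathrm{vol}(-K_X-tF)=(2-t)(11-2t-t^2)$; the pseudo-effective threshold of $F$ equals $2$. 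Hence
$$
S_X(F)=\frac1{20}\left(\int_0^1\big(20-12t\big)\,dt+\int_1^2(2-t)\big(11-2t-t^2\big)\,dt\right)=\frac{69}{80}<1 ,
$$
so $A_X(F)/S_X(F)=80/69>1$. By the Abban--Zhuang inequality,
$$
\delta_p(X)\ \geq\ \min\left\{\frac{80}{69},\ \delta_p\big(F;W^F_{\bullet,\bullet}\big)\right\},
$$
where $W^F_{\bullet,\bullet}$ is the refinement of $|-K_X|$ by $F$, and it remains to bound $\delta_p(F;W^F_{\bullet,\bullet})$ below by a constant strictly greater than $1$.

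\emph{Step 2: the surface and curve estimates.} Since $H_1|_F\equiv 0$ we have $(-K_X-tF)|_F\equiv -K_F$ for every $t$, and the Zariski decomposition above shows that on $F$ the series $W^F_{\bullet,\bullet}$ is governed by $-K_F$ together with subtraction of the $(-1)$-curve $\widetilde S|_F=\widetilde q$ (the strict transform of $q$): the mobile part of $(-K_X-tF)|_F$ equals $-K_F$ for $t\in[0,1]$ and $(5-2t)\,\ell-(2-t)\sum_{i=1}^{5}E_i$ for $t\in[1,2]$, where $\ell=H_2|_F$ and $E_1,\dots,E_5$ are the exceptional curves. For $p\in F$ we choose a curve $Z\subset F$ through $p$ --- a general member of $|{-}K_F|$ or of $|\ell|$ when $p$ is general, and one of the finitely many special curves (a $(-1)$-curve, the $(-2)$-curve when $F$ is weak, the conic $\widetilde q$, or the curve $E|_F$) when $p$ lies on such a curve --- and apply the Abban--Zhuang method once more on $F$. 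This reduces the estimate of $\delta_p(F;W^F_{\bullet,\bullet})$ to computing, for this $Z$, the quantities $S(W^F_{\bullet,\bullet};Z)$ and $S(W^{F,Z}_{\bullet,\bullet,\bullet};p)$, that is, to one-dimensional Zariski decompositions of $-K_F-vZ$ on the del Pezzo surface $F$; in each case the resulting normalised $S$-invariant is less than $1$, which gives $\delta_p(F;W^F_{\bullet,\bullet})>1$ and hence $\delta_p(X)>1$. For points $p$ lying on $\widetilde S$ it is more convenient to use $\widetilde S\cong\mathbb{P}^1\times\mathbb{P}^1$ itself as the first element of the flag, since there $(-K_X-t\widetilde S)|_{\widetilde S}=\tfrac{1+t}{2}(-K_{\widetilde S})$ grows with $t$, so the refinement is strictly larger than $-K_{\widetilde S}$ and the estimate closes comfortably.

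\emph{The main obstacle.} Two families of points require genuine care, and it is precisely here that the generality hypothesis enters. First, on each of the eight weak del Pezzo fibres $F$ the divisor $-K_F$ is only nef, and the estimate is tightest on the $(-2)$-curve; the hypothesis that every ramification index of $\eta$ equals $2$ guarantees that $F$ has exactly one $A_1$ singularity and nothing worse, which is what keeps the relevant $S$-invariant below $1$. Second, for $p\in E$ one works with the flag $X\supset F\supset(E\cap F)\supset p$, and the estimate degenerates as the five points $C\cap F$ collide on $q$; again the hypothesis forbids three or more of them from colliding, i.e. forbids a ramification point of index $\geq 3$ --- the very phenomenon behind the non-K-polystable members in Example~\ref{example:not-K-polystable}. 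Finally one must rule out divisors $E'$ over $X$ whose centre is contained in $C$ or in a fibre of $\phi$, which is done by bounding $A_X(E')/S_X(E')$ directly, again using that $\eta$ has no ramification point of index $\geq 3$. The core of the proof is thus the uniform verification, over all admissible parameters $a_i,b_i$, that the surface-level and curve-level $S$-invariants of Steps~1 and~2 stay below $1$ near the weak fibres and near $E$; this is what forces the case-by-case choice of flags, and it is also why the present argument stops short of the full strength of Conjecture~\ref{conjecture:main}.
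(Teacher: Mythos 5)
Your framework coincides with the paper's: you take the del Pezzo fibre $\overline{T}=F$ through the point as the first element of the flag, compute $S_X(F)=\tfrac{69}{80}$ (same Zariski decomposition, same value), invoke Abban--Zhuang to reduce to $\delta_p(F;W^F_{\bullet,\bullet})$, and dispose of points on $\widetilde S$ and of smooth fibres separately. Up to that point you are on the paper's track. But Step~2, which is where the theorem actually lives, is only asserted: you say that for each choice of auxiliary curve $Z$ ``the resulting normalised $S$-invariant is less than $1$'' without computing anything, and this is precisely the step that is \emph{not} routine. The danger point is a point $O$ lying over the image of the $(-2)$-curve of a singular fibre: there one has $\delta_O(\overline{T},-K_{\overline{T}})=1$ exactly (this is the value $\delta_P(T)=1$ for $P\in E_4$ quoted from \cite{Denisova}), so any estimate of $S(W^{\overline{T}}_{\bullet,\bullet};F)$ that only uses the anticanonical $\delta$-invariant of the fibre --- such as the crude bound $S(W^{\overline{T}}_{\bullet,\bullet};F)\le\tfrac{6}{5}\,A_{\overline{T}}(F)/\delta_O(\overline{T})$ --- fails to give anything below $A_{\overline{T}}(F)$. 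The paper's essential contribution is to exploit that for $u\in[1,2]$ the restricted divisor is $P(u)|_{\overline{T}}=-K_{\overline{T}}-(u-1)\overline{C}_2$, not $-K_{\overline{T}}$, and to compute the \emph{polarized} invariants $\delta_O(\overline{T},P(u)|_{\overline{T}})\ge f(u)$ on the minimal resolution (the appendix's case-by-case Zariski decompositions along $E_4$, $E_5$, $L_{14}$, $L_{24}$, etc.), after which $\tfrac{3}{20}\int_1^2(5-u^2)/f(u)\,du+\tfrac35\le\tfrac{99}{100}$ closes the argument. Nothing in your sketch supplies this mechanism, and the paper's remark about the $\mathbb{A}_2$ case shows the inequality is genuinely tight, so the conclusion cannot be waved through.

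Two smaller inaccuracies: since $C$ has genus $0$ and $\eta$ has degree $5$, the hypothesis gives eight ramification points but not necessarily eight branch points, so a singular fibre may have \emph{two} $\mathbb{A}_1$ points (one length-$2$ scheme for each ramification point over the same base point); the paper treats this configuration separately in the appendix, while your description (``eight special fibres, each \ldots\ a single $(-2)$-curve'') omits it. Also, the reduction to points should be justified as in the paper by Fujita's theorem excluding divisorial centres and then choosing a point on the centre, rather than by an unexplained direct bound on $A_X(E')/S_X(E')$ for divisors centred in $C$ or in a fibre --- as stated, that final sentence of yours is an additional unproved claim rather than a step of the argument.
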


\noindent Let  $\mathrm{pr}_1: \DP^1\times \DP^2 \to \DP^1$ be the projection to the first factor and $\phi_1=\mathrm{pr}_1\circ\pi$.
Then $\phi_1$ is a fibration into del Pezzo surfaces of degree four,
and Theorem  and Conjecture~\ref{conjecture:main} can be restated as follows:

\begin{maintheorem*}
\label{theorem:main-2}
If every singular fiber of $\phi_1$ has only singular points of type $\mathbb{A}_1$, then $X$ is K-stable.
\end{maintheorem*}
\begin{conjecture}
The Fano threefold $X$ is $K$-stable if and only if every singular fiber of $\phi_1$ has only singular points of type $\mathbb{A}_1$, $\mathbb{A}_2$ or
$\mathbb{A}_3$.
\end{conjecture}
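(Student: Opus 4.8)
The plan is to deduce this biconditional from Conjecture~\ref{conjecture:main} by first pinning down the dictionary between the ramification data of $\eta$ and the singular fibers of $\phi_1$. The fiber of $\phi_1$ over a point $[u:v]\in\DP^1$ is the blow up of $\DP^2$ along the length-five subscheme $C\cap(\{[u:v]\}\times\DP^2)$, whose support lies on the smooth conic that is the image of the second factor of $S$. I would show that when $k$ of these five points collide along the conic---which happens precisely when $[u:v]$ is the image of a ramification point of $\eta$ of index $k$---the resulting del Pezzo surface of degree four acquires an $\mathbb{A}_{k-1}$ singularity, and that, the five points being confined to a smooth conic, every fiber singularity is necessarily of type $\mathbb{A}$. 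Consequently the condition that every singular fiber of $\phi_1$ have only singular points of type $\mathbb{A}_1$, $\mathbb{A}_2$ or $\mathbb{A}_3$ is equivalent to $\eta$ having no ramification point of index five, which under our normalization (the point $([1:0],[0:1])$ carries the largest ramification index) is exactly the condition $(a_1,a_2,a_3)\ne(0,0,0)$. Thus the statement coincides with Conjecture~\ref{conjecture:main}.

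The "only if" direction then follows from the cited results. Suppose $(a_1,a_2,a_3)=(0,0,0)$, so that some fiber of $\phi_1$ is of type $\mathbb{A}_4$. If $(b_1,b_2,b_3)\ne(0,0,0)$, then $X$ is not K-polystable by Example~\ref{example:not-K-polystable}, hence not K-stable. If instead $(b_1,b_2,b_3)=(0,0,0)$, then $X$ is K-polystable but $\mathrm{Aut}(X)\cong\DC^\ast\rtimes\DZ/2\DZ$ is infinite, so $X$ is again not K-stable. Either way the presence of an $\mathbb{A}_4$ fiber obstructs K-stability.

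The heart of the matter is the "if" direction, which amounts to extending the Main Theorem from the all-$\mathbb{A}_1$ situation to fibers of type $\mathbb{A}_2$ and $\mathbb{A}_3$. I would run the same Abban--Zhuang strategy: to prove $\delta(X)>1$ it suffices to check $A_X(E)>S_X(E)$ for every prime divisor $E$ over $X$, and one organizes this according to the image $\phi_1(C_X(E))\subset\DP^1$. For divisors whose center dominates $\DP^1$ or maps to a point with smooth or $\mathbb{A}_1$ fiber, the estimates of the Main Theorem carry over unchanged, since those computations only involve the generic del Pezzo fiber; the singular fibers of higher type are finitely many, so the genuinely new work is confined to a local analysis over each such point of $\DP^1$. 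There one takes the singular fiber $F$ as the first step of the Abban--Zhuang flag, uses the adjunction $(-K_X)|_F\sim -K_F$, and reduces to a refined $\delta$-computation on the singular dP4 surface $F$ with the filtration induced by $-K_X$. Since degree-four del Pezzo surfaces with $\mathbb{A}_1$, $\mathbb{A}_2$ or $\mathbb{A}_3$ singularities have a well-understood stability behaviour, the fiberwise input should be available; the task is to propagate it through the Abban--Zhuang inequality down to curves and points on $F$.

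The main obstacle I expect is the local analysis at the singular points of the $\mathbb{A}_3$ fibers, the worst case the conjecture permits and the one lying closest to the $\mathbb{A}_4$ threshold where K-stability actually fails. Near such a point the chain of three $(-2)$-curves inside $F$, together with the $\pi$-exceptional divisor over $C$ and the components of $F$ meeting the node, produces several candidate destabilizing valuations whose $S_X$-values must be computed and bounded above by $A_X$. Controlling the volumes $\mathrm{vol}(-K_X-tE)$ for divisors $E$ extracted from these configurations---in particular tracking how the pseudo-effective threshold and the Zariski decomposition degrade as the ramification index grows from $2$ to $4$---is the delicate quantitative point. I anticipate that the inequalities remain strict for index at most $4$ and break exactly at index $5$, in agreement with Example~\ref{example:not-K-polystable}, but establishing this uniformly over all such $E$ is where the real difficulty resides.
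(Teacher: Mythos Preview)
The statement you are addressing is a \emph{conjecture} in the paper; it is not proved there, and the paper does not contain a proof for you to compare against. Your reduction of the statement to Conjecture~\ref{conjecture:main} via the ramification/singularity dictionary is exactly how the paper arrives at this reformulation, and your treatment of the ``only if'' direction (splitting into the cases $(b_1,b_2,b_3)\ne(0,0,0)$ and $(b_1,b_2,b_3)=(0,0,0)$) is correct and matches what the paper says.

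The genuine gap is in the ``if'' direction. You propose to run the same Abban--Zhuang flag as in the Main Theorem, taking the singular fiber $\overline{T}$ as the first step and reducing to a polarized $\delta$-computation on the del Pezzo surface. The paper carries out precisely this computation in Appendix~\ref{A2} for an $\mathbb{A}_2$ fiber and shows that it \emph{fails}: at the $\mathbb{A}_2$ point $O$ one finds a curve $\overline{C}\subset\overline{T}$ with
\[
\delta_O(\overline{T},\overline{D})=\frac{u^3-6u^2+19}{15-3u^2},
\]
and feeding this into the Abban--Zhuang inequality yields $S(W^{\overline{T}}_{\bullet,\bullet};\overline{C})=\tfrac{83}{80}\,A_{\overline{T}}(\overline{C})$, so $\delta_O(\overline{T},W^{\overline{T}}_{\bullet,\bullet})\le \tfrac{80}{83}<1$ and no contradiction is obtained. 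In other words, the very flag you intend to use is already too weak at an $\mathbb{A}_2$ point, long before the $\mathbb{A}_3$ case you single out as the hard one. Your expectation that ``the fiberwise input should be available'' and will ``propagate through the Abban--Zhuang inequality'' is therefore contradicted by an explicit computation in the paper. Proving the conjecture would require a genuinely different flag (or a different method altogether) at the $\mathbb{A}_2$ and $\mathbb{A}_3$ points, and the paper offers no candidate for this; that is why the statement remains a conjecture.
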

\noindent
\noindent {\bf Acknowledgments:} I am grateful to my supervisor Professor Ivan Cheltsov for the introduction to the topic and his continuous support.

\section{The Proof}
\label{section:proof}
\noindent To prove {\bf Main Theorem}, we suppose that each singular fiber of the fibration $\phi_1$ has one or two singular~points of type $\mathbb{A}_1$. Note that this fiber is a del Pezzo surface of degree $4$ with Du Val singularities. We know (\cite{Fujita,Li}) that the Fano threefold $X$ is $K$-stable if and only if for every prime divisor $\mathbf{F}$ over $X$ we have
$$
\beta(\mathbf{F})=A_X(\mathbf{F})-S_X(\mathbf{F})>0
$$
 where $A_X(\mathbf{F})$ is the~log discrepancy of the~divisor $\mathbf{F}$, and
$$
S_X\big(\mathbf{F}\big)=\frac{1}{(-K_X)^3}\int\limits_0^{\infty}\mathrm{vol}\big(-K_X-u\mathbf{F}\big)du.
$$
To show this, we fix a prime divisor $\mathbf{F}$ over~$X$.
Then we set $Z=C_{X}(\mathbf{F})$.
If $Z$ is an irreducible~surface, then it follows from \cite{Fujita2016} that $\beta(\mathbf{F})>0$, see also \cite[Theorem 3.17]{Book}.
Therefore, we may assume that
\begin{itemize}
\item either  $Z$ is an irreducible curve in $X$,
\item or $Z$ is a point in $X$.
\end{itemize}
In both cases, we fix a point $O\in Z$. Let $\overline{T}$ be the fiber 
of $\phi_1$ which contains $O$. Then $\overline{T}$ is a del Pezzo surface with at most Du Val singularities. Set
$$\tau(\overline{T}) = \mathrm{sup}\Big\{u\in\DR_{>0}\big|\text{ the divisor }-K_X - u\overline{T} \text{ is pseudo-effective}\Big\}$$
For $u \in [0, \tau(\overline{T}) ]$ let $P(u)$ be the~positive part of the~Zariski decomposition of the~divisor $-K_X-u\overline{T}$,
and let $N(u)$ be its negative part. Then we have
$$
P(u)=\left\{\aligned
&-K_X-u\overline{T} \ \text{ if } u\in[0,1], \\
&-K_X-u\overline{T}-(u-1)\widetilde{S}\ \text{ if } u\in [1,2],
\endaligned
\right.
\text{ and }
N(u)= \left\{\aligned
&0\ \text{ if } u\in[0,1], \\
&(u-1)\widetilde{S}\ \text{ if } u\in[1,2],
\endaligned
\right.
$$
which gives 
$$S_{X}(\overline{T})=\frac{1}{20}\int\limits_{0}^{2}P(u)^3du=\frac{69}{80}<1$$ 
Now, for every prime divisor $F$ over the surface $\overline{T}$, we set
$$
S\big(W^{\overline{T}}_{\bullet,\bullet};F\big)=\frac{3}{(-K_X)^3}\int\limits_0^{\tau}\mathrm{ord}_F\big(N(u)\vert_{\overline{T}}\big)\big(P(u)\vert_{\overline{T}}\big)^2du+\frac{3}{(-K_X)^3}\int\limits_0^\tau\int\limits_0^{\infty}\mathrm{vol}\big(P(u)\big\vert_{\overline{T}}-vF\big)dvdu.
$$
Then, following \cite{AbbanZhuang,Book}, we let
$$
\delta_O\big(\overline{T},W^{\overline{T}}_{\bullet,\bullet}\big)=\inf_{\substack{F/\overline{T}\\O\in C_{\overline{T}}(F)}}\frac{A_T(F)}{S\big(W^{\overline{T}}_{\bullet,\bullet};F\big)},
$$
where the~infimum is taken by all prime divisors over the surface $\overline{T}$ whose center on $\overline{T}$ contains $O$.
Then it follows from \cite{AbbanZhuang,Book} that
\begin{equation*}
\label{equation:AZ}%\tag{$\bigstar$}
\frac{A_X(\mathbf{F})}{S_X(\mathbf{F})}\geqslant\min\Bigg\{\frac{1}{S_X(\overline{T})},\delta_O\big(\overline{T},W^{\overline{T}}_{\bullet,\bullet}\big)\Bigg\}.
\end{equation*}
Therefore, if $\beta(\mathbf{F})\leqslant 0$,
then $\delta_O(\overline{T},W^{\overline{T}}_{\bullet,\bullet})\leqslant 1$.

%\begin{remark} \label{remark1} If $Z$ is a point and $\beta(F)\leqslant 0$, then it follows from \cite{AbbanZhuang,Book} that $\delta_O(T,W^{\overline{T}}_{\bullet,\bullet})<1$. \end{remark}
\noindent Let's prove that $\delta_O(\overline{T},W^{\overline{T}}_{\bullet,\bullet}) > 1$. To estimate $\delta_O(T,W^{\overline{T}}_{\bullet,\bullet})$, we set $\overline{D}=P(u)\vert_{\overline{T}}$. We have 
$$
\overline{D}=\left\{\aligned
&-K_{\overline{T}} \ \text{ if } u\in[0,1], \\
&-K_{\overline{T}}-(u-1)\overline{C}_2\ \text{ if } u\in [1,2],
\endaligned
\right.$$
where $\overline{C}_2:=\widetilde{S}|_{\overline{T}}$. Then $\overline{D}$ is ample for $u\in[0,2)$, and
$$
\overline{D}^2=\left\{\aligned
& 4\ \text{ if } u\in[0,1], \\
& 5-u^2 \ \text{ if } u\in[1,2].
\endaligned
\right.
$$
We denote $\widetilde{S}$ to be the~proper transform on $X$ of the~surface $S$. By Lemma \cite[5.68]{Book} and Lemma\cite[5.69]{Book}  we have
\begin{lemma}
\label{lemma:3-5-S-P-delta}
If $O\in \widetilde{S}$ then $\delta_O(X) > 1$.
\end{lemma}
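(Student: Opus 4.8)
The plan is to feed the surface $\widetilde{S}$ itself into the Abban--Zhuang formalism, exactly as was done above with the del Pezzo fiber $\overline{T}$. Since $C$ is a Cartier divisor on the smooth surface $S$, the strict transform is unchanged by the blow-up and $\widetilde{S}\cong S\cong\mathbb{P}^1\times\mathbb{P}^1$ is a smooth prime divisor on $X$, so $A_X(\widetilde{S})=1$. Applying the displayed Abban--Zhuang inequality with $\widetilde{S}$ in place of $\overline{T}$, for every prime divisor $\mathbf{F}$ over $X$ with $O\in C_X(\mathbf{F})$ we obtain
$$
\frac{A_X(\mathbf{F})}{S_X(\mathbf{F})}\geqslant\min\Bigl\{\frac{1}{S_X(\widetilde{S})},\ \delta_O\bigl(\widetilde{S},W^{\widetilde{S}}_{\bullet,\bullet}\bigr)\Bigr\},
$$
so, taking the infimum over such $\mathbf{F}$, it suffices to prove $S_X(\widetilde{S})<1$ and $\delta_O(\widetilde{S},W^{\widetilde{S}}_{\bullet,\bullet})>1$ for every $O\in\widetilde{S}$. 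These are precisely \cite[Lemma~5.68]{Book} and \cite[Lemma~5.69]{Book}; I indicate why they hold.

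For $S_X(\widetilde{S})$ one computes the Zariski decomposition of $-K_X-u\widetilde{S}$. With $E$ the $\pi$-exceptional divisor, $-K_X=\pi^{*}\mathcal{O}(2,3)-E$ and $\widetilde{S}=\pi^{*}\mathcal{O}(0,2)-E$, hence
$$
-K_X-u\widetilde{S}=\pi^{*}\mathcal{O}(2,3-2u)-(1-u)E ,
$$
which is nef for $u\in[0,1]$; for $u>1$ it becomes $\pi^{*}\mathcal{O}(2,3-2u)+(u-1)E$, and one extracts $P(u)$ by peeling off successive negative components (first $E$, then further divisors once $\mathcal{O}(2,3-2u)$ leaves the nef cone of $\mathbb{P}^1\times\mathbb{P}^2$) up to the pseudo-effective threshold $\tau(\widetilde{S})$. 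Integrating $P(u)^{3}$ against $\tfrac{1}{(-K_X)^{3}}=\tfrac1{20}$ yields an explicit rational number $<1$. Every intersection number involved lives on $\mathbb{P}^1\times\mathbb{P}^2$ or on $\widetilde{S}\cong\mathbb{P}^1\times\mathbb{P}^1$ (for instance $-K_X|_{\widetilde{S}}=-K_{\widetilde{S}}$ and $\widetilde{S}|_{\widetilde{S}}=K_{\widetilde{S}}$), so the bound does not depend on the parameters $a_i$, $b_i$.

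For the local bound one runs the surface version of Abban--Zhuang on $\widetilde{S}$: through each $O\in\widetilde{S}$ choose a smooth flag curve $\Gamma\subset\widetilde{S}$ — a ruling of one of the two projections of $\mathbb{P}^1\times\mathbb{P}^1$, or the curve $C=\widetilde{S}\cap E$ — so that $\delta_O(\widetilde{S},W^{\widetilde{S}}_{\bullet,\bullet})$ is bounded below in terms of $A_{\widetilde{S}}(\Gamma)=1$, the divisorial invariant $S(W^{\widetilde{S}}_{\bullet,\bullet};\Gamma)$, and the refined point invariants $S(W^{\widetilde{S},\Gamma}_{\bullet,\bullet,\bullet};O)$. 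As long as $N(u)$ has no $\widetilde{S}$-component one has $P(u)|_{\widetilde{S}}=(1+u)(-K_{\widetilde{S}})$, and the intersection theory of rulings on a smooth quadric is completely explicit, so all the volume integrals are elementary and come out $<1$, giving $\delta_O(\widetilde{S},W^{\widetilde{S}}_{\bullet,\bullet})>1$.

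The delicate part is this last step at the special points, those $O$ lying on $C=\widetilde{S}\cap E$: there the negative part acquires a component through $O$ (indeed $E|_{\widetilde{S}}=C$, so $C$ enters $N(u)|_{\widetilde{S}}$ once $u>1$), the restricted volumes $\mathrm{vol}(P(u)|_{\widetilde{S}}-v\Gamma)$ must be tracked piecewise over the whole range $u\in[0,\tau(\widetilde{S})]$, and the flag curve $\Gamma$ must be chosen so as not to be a component of $N(u)|_{\widetilde{S}}$ while still passing through $O$. Verifying that the resulting integrals still satisfy $S(W^{\widetilde{S}}_{\bullet,\bullet};F)<A_{\widetilde{S}}(F)$ for every $F$ over $\widetilde{S}$ centered at such an $O$, uniformly in the moduli, is exactly the content of \cite[Lemma~5.69]{Book}, and it is the main obstacle.
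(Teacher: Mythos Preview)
Your approach is exactly that of the paper: the lemma is not proved in the paper at all but is simply quoted from \cite[Lemmas~5.68 and~5.69]{Book}, and your proposal correctly identifies this and sketches the Abban--Zhuang argument on the surface $\widetilde{S}$ that underlies those lemmas.

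One correction to your outline, though: the restriction formulas you wrote are off by a factor of two and in fact contradict adjunction (if $-K_X|_{\widetilde{S}}=-K_{\widetilde{S}}$ and $\widetilde{S}|_{\widetilde{S}}=K_{\widetilde{S}}$ then $K_{\widetilde{S}}=0$). With $C\sim\mathcal{O}_S(1,5)$ in the standard bidegree convention, one has $\mathcal{O}_{\mathbb{P}^1\times\mathbb{P}^2}(2,3)|_S=\mathcal{O}_S(2,6)$ and $E|_{\widetilde{S}}=C$, hence
\[
-K_X|_{\widetilde{S}}\sim\mathcal{O}_S(1,1)=\tfrac{1}{2}(-K_{\widetilde{S}}),\qquad
\widetilde{S}|_{\widetilde{S}}\sim\mathcal{O}_S(-1,-1)=\tfrac{1}{2}K_{\widetilde{S}},
\]
so for $u\in[0,1]$ the correct restriction is $P(u)|_{\widetilde{S}}=\tfrac{1+u}{2}(-K_{\widetilde{S}})$. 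This does not change the strategy (proportionality to $-K_{\widetilde{S}}$ is what makes the surface computation tractable), but the numerics in \cite{Book} will not match unless you use the right scaling.
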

\begin{lemma}
\label{lemma:3-5-T-P-delta}
If $\overline{T}$ is smooth then $\delta_O(X)>1$.
\end{lemma}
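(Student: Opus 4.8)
The plan is to apply the Abban--Zhuang method one level further, inside the surface $\overline{T}$. Since $S_X(\overline{T})=\frac{69}{80}$ we have $\frac{1}{S_X(\overline{T})}=\frac{80}{69}>1$, so by the displayed estimate it suffices to prove $\delta_O\big(\overline{T},W^{\overline{T}}_{\bullet,\bullet}\big)>1$. If $O\in\overline{C}_2=\widetilde{S}|_{\overline{T}}$ then $O\in\widetilde{S}$ and we are done by Lemma~\ref{lemma:3-5-S-P-delta}, so I would assume $O\notin\overline{C}_2$. Expanding $\overline{D}^2=\big(-K_{\overline{T}}-(u-1)\overline{C}_2\big)^2=5-u^2$ on $[1,2]$ gives $(-K_{\overline{T}})\cdot\overline{C}_2=1$ and $\overline{C}_2^2=-1$: thus $\overline{C}_2$ is a $(-1)$-curve, $\overline{T}$ is a smooth del Pezzo surface of degree $4$, $\overline{C}_2$ is one of its sixteen lines, and $\overline{T}$ carries five conic bundle structures.

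Next I would fix a smooth curve $Z\subset\overline{T}$ with $O\in Z$ and use the surface Abban--Zhuang inequality
$$
\delta_O\big(\overline{T},W^{\overline{T}}_{\bullet,\bullet}\big)\ \geqslant\ \min\left\{\frac{A_{\overline{T}}(Z)}{S\big(W^{\overline{T}}_{\bullet,\bullet};Z\big)},\ \frac{1}{S\big(W^{\overline{T},Z}_{\bullet,\bullet,\bullet};O\big)}\right\},
$$
so that the task becomes: choose $Z$ (for which $A_{\overline{T}}(Z)=1$) with both $S$-invariants $<1$. For $Z$ I would take a smooth conic $\mathfrak{c}$ through $O$ with $\mathfrak{c}\cdot\overline{C}_2=0$, realised as an irreducible fibre of one of the conic bundles. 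A short analysis of the configuration of the sixteen lines shows such a $\mathfrak{c}$ exists for every $O\notin\overline{C}_2$ except when $O$ is the intersection point of two lines of $\overline{T}$ of a special type, in which case $O$ lies on a line $\ell\neq\overline{C}_2$ with $\ell\cdot\overline{C}_2=0$ and one takes $Z=\ell$ instead.

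The core of the argument is the two-variable Zariski decomposition $\overline{D}(u)-v\mathfrak{c}=P(u,v)+N(u,v)$, where $\overline{D}(u)=-K_{\overline{T}}$ for $u\in[0,1]$ and $\overline{D}(u)=-K_{\overline{T}}-(u-1)\overline{C}_2$ for $u\in[1,2]$. The chamber walls occur exactly where $\overline{D}(u)-v\mathfrak{c}$ stops being nef on one of the lines of $\overline{T}$; for $u\in[0,1]$ the divisor stays nef up to the pseudo-effective threshold $t(u)$, while for $u\in[1,2]$ the extra term $(u-1)\overline{C}_2$ shifts the walls, there are exactly two chambers, the negative part in the second being a multiple of four pairwise disjoint lines, and $t(u)$ is reached when the positive part degenerates to a multiple of the nef class $-K_{\overline{T}}-\mathfrak{c}$. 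In each chamber $\mathrm{vol}\big(\overline{D}(u)-v\mathfrak{c}\big)$ and $P(u,v)\cdot\mathfrak{c}$ are polynomials and the integrations are elementary; I expect $S\big(W^{\overline{T}}_{\bullet,\bullet};\mathfrak{c}\big)=\frac{9}{20}$ and, for the leading double-integral part, $\frac{3}{(-K_X)^3}\int_0^2\!\int_0^{t(u)}\big(P(u,v)\cdot\mathfrak{c}\big)^2\,dv\,du=\frac{3}{20}\cdot\frac{13}{2}=\frac{39}{40}$. Because $\mathfrak{c}\cap\overline{C}_2=\varnothing$, the only local correction at $O$ comes from the four-line negative part, and as $O$ lies on at most one of those lines it contributes at most $\frac{1}{80}$; hence $S\big(W^{\overline{T},\mathfrak{c}}_{\bullet,\bullet,\bullet};O\big)\leqslant\frac{79}{80}<1$, giving $\delta_O\big(\overline{T},W^{\overline{T}}_{\bullet,\bullet}\big)\geqslant\frac{80}{79}$ and finally $\delta_O(X)\geqslant\frac{80}{79}>1$. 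The line case $Z=\ell$ runs on the same template: $\overline{D}(u)-v\ell$ has the chamber structure of $-K_{\overline{T}}$ (resp. $-K_{\overline{T}}-(u-1)\overline{C}_2$) minus a multiple of a line, and again yields $S$-invariants $<1$.

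I expect the main obstacle to be the choice of the flag curve $Z$. A general member of $|-K_{\overline{T}}|$ through $O$ makes $S\big(W^{\overline{T}}_{\bullet,\bullet};\,\cdot\,\big)$ very small, but it has degree $4$ against $-K_{\overline{T}}$ and a direct computation gives $S\big(W^{\overline{T},C}_{\bullet,\bullet,\bullet};O\big)=\frac{13}{10}>1$, so it fails the point estimate; a curve of small anticanonical degree is essential, and even with a conic the margin at $O$ (namely $\frac{39}{40}$ before the local correction) is narrow, so the contributions of the negative part at $O$ must be tracked precisely. The accompanying difficulty is uniformity over $O$: its position relative to the sixteen lines and to $\overline{C}_2$ determines which conic bundle is admissible, which components of $N(u,v)$ pass through $O$, and --- for the finitely many points lying on two lines of $\overline{T}$ --- forces the separate (analogous) line computation. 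No individual Zariski-chamber calculation is hard; the work is in organising the case analysis so that the final estimate is genuinely $>1$ for every $O$.
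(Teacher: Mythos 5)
Your proposal is correct in substance, but it takes a genuinely different route from the paper: the paper does not prove this lemma at all, it simply imports it from \cite[Lemmas 5.68 and 5.69]{Book} (the analysis of family \textnumero~3.5 there), whereas you re-prove the smooth-fiber case by running the Abban--Zhuang flag machinery inside the smooth quartic del Pezzo fiber $\overline{T}$. Your key numbers do check out: for a conic $\mathfrak{c}$ through $O$ with $\mathfrak{c}\cdot\overline{C}_2=0$ the decomposition of $\overline{D}(u)-v\mathfrak{c}$ for $u\in[1,2]$ has exactly two chambers, with negative part $(u+v-2)(E_1+\dots+E_4)$ and threshold $t(u)=\tfrac{3-u}{2}$ where the positive part degenerates to $(2-u)\bigl(-K_{\overline{T}}-\mathfrak{c}\bigr)$; this gives $S\bigl(W^{\overline{T}}_{\bullet,\bullet};\mathfrak{c}\bigr)=\tfrac{3}{20}(2+1)=\tfrac{9}{20}$, a leading term $\tfrac{3}{20}\cdot\tfrac{13}{2}=\tfrac{39}{40}$, and a local correction of exactly $\tfrac{1}{80}$ when $O$ lies on one of the four exceptional curves, hence $S\bigl(W^{\overline{T},\mathfrak{c}}_{\bullet,\bullet,\bullet};O\bigr)\leqslant\tfrac{79}{80}$ and $\delta_O(X)\geqslant\tfrac{80}{79}>1$. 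Two repairs are needed, neither fatal. First, a smooth quartic del Pezzo has ten conic pencils, not five; what you actually use is that exactly the five classes $2L-E_{i_1}-\dots-E_{i_4}$ are orthogonal to $\overline{C}_2$, and that an irreducible fiber through $O$ in one of these pencils exists unless $O$ lies on a line $L_{ij}$ (when $O$ lies on two such lines every admissible fiber through $O$ is reducible, so the line fallback is genuinely forced). Second, the line case is asserted but not computed, and it does not follow the two-chamber template verbatim: for $Z=L_{ij}$ and $u\in(1,2)$ there are three chambers (walls at $v=2-u$ from $E_i,E_j$ and at $v=1$ from the three lines $L_{kl}$ disjoint from $L_{ij}$, with $\tau(u)=2-\tfrac{u}{2}$); carrying out the integrals gives $S\bigl(W^{\overline{T}}_{\bullet,\bullet};L_{ij}\bigr)=\tfrac{21}{32}$ and $S\bigl(W^{\overline{T},L_{ij}}_{\bullet,\bullet,\bullet};O\bigr)\leqslant\tfrac{25}{32}$, so the conclusion survives, but this case must be written out rather than waved at. As for what each route buys: the paper's citation is short and leans on the Book's treatment of the general member, while your computation is self-contained, matches the style of the paper's own appendix for the singular fibers, and produces explicit uniform bounds on $\delta_O\bigl(\overline{T},W^{\overline{T}}_{\bullet,\bullet}\bigr)$ for every point of a smooth fiber.
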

\noindent Thus, to prove {\bf Main Theorem}, we may assume that $O\not\in\widetilde{S}$ and $\overline{T}$ is singular. Recall that
$$
\delta_O\big(\overline{T},\overline{D}\big)=\inf_{\substack{F/\overline{T}\\O\in C_{\overline{T}}(F)}}\frac{A_{\overline{T}}(F)}{S_{\overline{D}}\big(F\big)}\text{ where }\index{$S_D(F)$}
S_{\overline{D}}(F)=\frac{1}{\overline{D}^2}\int\limits_0^{\tau}\mathrm{vol}\big(\overline{D}-vF\big)dv$$
where $\tau=\tau(F)$ is the~pseudo-effective threshold of $F$ with respect to $\overline{D}$. Usually $\delta_O(\overline{T},-K_{\overline{T}})$  is denoted  by $\delta_O(\overline{T})$.
\\Note that since $O\not\in \widetilde{S}$ then for any divisor $F$ over $\overline{T}$ then  we get
\begin{align*}
S\big(W^{\overline{T}}_{\bullet,\bullet};F\big)&=\frac{3}{(-K_X)^3}\Bigg(\int_0^\tau\big(P(u)^{2}\cdot \overline{T}\big)\cdot\mathrm{ord}_{O}\Big(N(u)\big\vert_{\overline{T}}\Big)du+\int_0^\tau\int_0^\infty \mathrm{vol}\big(P(u)\big\vert_{\overline{T}}-vF\big)dvdu\Bigg)=\\
&= \frac{3}{20}\int_0^\tau\int_0^\infty \mathrm{vol}\big(P(u)\big\vert_{\overline{T}}-vF\big)dvdu=\\
&= \frac{3}{20}\Bigg(\int_0^1\int_0^\infty \mathrm{vol}\big(-K_{\overline{T}}-vF\big)dvdu+\int_1^2\int_0^\infty \mathrm{vol}\big(-K_{\overline{T}}-(u-1)\overline{C}_2-vF\big)dvdu\Bigg)=\\
&= \frac{3}{20}\Bigg(\int_0^\infty \mathrm{vol}\big(-K_{\overline{T}}-vF\big)dv+\int_0^\infty \mathrm{vol}\big(-K_T-(u-1)\overline{C}_2-vF\big)dv\Bigg)\le\\
&= \frac{3}{20}\Bigg(\int_0^\infty \mathrm{vol}\big(-K_{\overline{T}}-vF\big)dv+\int_0^\infty \mathrm{vol}\big(-K_{\overline{T}}-vF\big)dv\Bigg)=\\
&= \frac{3}{10}\Bigg(\int_0^\infty \mathrm{vol}\big(-K_{\overline{T}}-vF\big)dv\Bigg)=\frac{6}{5}\Bigg(\frac{1}{4}\int_0^\infty \mathrm{vol}\big(-K_{\overline{T}}-vF\big)dv\Bigg)=\\
&=\frac{6}{5}S_{\overline{T}}(F)\le \frac{6}{5}\cdot \frac{A_{\overline{T}}(F)}{\delta_O(\overline{T})}
\end{align*}
Thus, if $\delta_O(\overline{T})>6/5$, then $\delta_O(\overline{T},W^{\overline{T}}_{\bullet,\bullet})>1$. To estimate $\delta_O(\overline{T},W^{\overline{T}}_{\bullet,\bullet})$ in the case when $\delta_O(\overline{T})\le 6/5$, we define the following positive continuous function on $[1,2]$:
%So we only need to check points of singular del Pezzo surfaces for which $\delta_O(S)\le \frac{6}{5}$ For $u\in [0,1]$ we have $\overline{D}=-K_{\overline{T}}$ so  the result follows from \cite[Sections 6.2, 6.3, 6.4]{Denisova}. From now on we assume that $u\in [1,2]$ and $\overline{D}=-K_{\overline{T}}-(u-1)\overline{C}_2$. Let's define a positive continuous function on $[1,2]$:
$$f(u):=
\left\{
\aligned
&\frac{15 - 3 u^2}{16 + 3 u - 9 u^2 + 2 u^3}\text{ if }u\in [1,a],\\
&\frac{15 - 3 u^2}{11 - u^3}\text{ if }u\in [a,2]
\endaligned
\right.$$
where $a$ is a root of $3u^3 - 9u^2 + 3u + 5$ on $[1,2]$. More precisely,  $a\in [1.355,1.356]$. In the appendix we prove that for each $O$ such that $\delta_O(\overline{T})\le \frac{6}{5}$ we have $\delta_{O}(\overline{T},\overline{D})\ge f(u)$ for every $u\in[1,2]$. So we obtain 
\begin{align*}
    S\big(W^{\overline{T}}_{\bullet,\bullet}&;F\big)=\frac{3}{(-K_X)^3}\int\limits_1^{2}\int\limits_0^\tau\mathrm{vol}\big(P(u)\big\vert_{\overline{T}}-vF\big)dvdu+\frac{3}{(-K_X)^3}\int\limits_0^{1}\int\limits_0^\tau\mathrm{vol}\big(P(u)\big\vert_{\overline{T}}-vF\big)dvdu\le\\
    &\le\frac{3}{20} \Bigg(\int \limits_1^{2}\frac{(5-u^2)}{\delta_O(\overline{T},\overline{D})}du\Bigg)A_{\overline{T}}(F)+\frac{3}{20}\cdot \frac{4A_{\overline{T}}(F)}{\delta_O(\overline{T})}\le\frac{3}{20} \Bigg(\int \limits_1^{2}\frac{(5-u^2)}{f(u)}du\Bigg)A_{\overline{T}}(F)+\frac{3}{5}A_{\overline{T}}(F)\le\\
    &\le \frac{3}{20} \Bigg(\int \limits_1^{1.356}(5-u^2)\frac{16 + 3 u - 9 u^2 + 2 u^3}{15 - 3 u^2} du+\int \limits_{1.355}^{2}(5-u^2)\frac{11 - u^3}{15 - 3 u^2} du\Bigg)A_{\overline{T}}(F)+\frac{3}{5}A_{\overline{T}}(F)\le\\
    &\le \frac{99}{100}A_{\overline{T}}(F)
\end{align*}
Thus $\frac{A_{\overline{T}}(F)}{S\big(W^{\overline{T}}_{\bullet,\bullet};F\big)}\ge\frac{100}{99}$  for every prime divisor $F$ over $\overline{T}$ whose support on $F$ contains $O$, so that $\delta_O(W^{\overline{T}},F)\ge \frac{100}{99}$,
which implies $\beta(\mathbf{F})>0$ and $X$ is $K$-stable. 
\begin{remark}
If $O$ were a singular point of type $\mathbb{A}_2$, this approach would not work, because as is shown in Appendix \ref{A2} there exists a curve $\overline{C}$ on $\overline{T}$ containing $O$  such that $\delta_{O}(\overline{T},\overline{D})= \frac{u^3 - 6u^2 + 19}{15-3u^2}$ so we get that 
   $$S(W^{\overline{T}};\overline{C}) \le \frac{3}{20} \Bigg(\int \limits_1^{2}\frac{(5-u^2)}{\delta_P(\overline{T},\overline{C})}du\Bigg)A_{\overline{T}}(\overline{C})+\frac{3}{5}A_{\overline{T}}(\overline{C}) =\frac{83}{80}A_{\overline{T}}(\overline{C}) $$
   so $\frac{A_{\overline{T}}(F)}{S\big(W^{\overline{T}}_{\bullet,\bullet};\overline{C}\big)}<1$ and we do not get a contradiction.
\end{remark}
\appendix
\section{Polarized $\delta$-invariant via  Kento Fujita’s formulas}
\noindent Let us use notations from Section 2. Recall that $\overline{T}$ is a Du Val del Pezzo surface, and the blow up $\pi$ induces a birational morphism $\upsilon:\overline{T}\to \DP^2$. We assume that $\overline{T}$ is singular so $\upsilon$ is a weighted blow up.  We have the following commutative diagram
$$\xymatrix{
&T\ar[dl]_{\sigma}\ar[dr]^{\eta}&\\
\overline{T}\ar[rr]^{\upsilon}& &\DP^2
}$$
Suppose that $u\in[1,2]$. Recall that $\overline{D}=-K_{\overline{T}}-(1-u)\overline{C}_2$. Observe that $\overline{C}_2$ is contained in the smooth locus of the surface $\overline{T}$. Let $C_2$ be the strict transform of the curve $\overline{C}_2$ on the surface $T$, set $D=-K_T-(1-u)C_2$. Note that $D=\sigma^*(\overline{D})$
so the divisor $D$ is big and nef for $u\in[1,2]$. Recall that
$$
\delta_O(\overline{T},\overline{D})=\inf_{\substack{F/\overline{T}\\ O\in C_{\overline{T}}(F)}}\frac{A_{\overline{T}}(F)}{S_D(F)}
$$
where the infimum is run over all prime divisor $F$ over $\overline{T}$ such that $O\in C_{\overline{T}}(F)$.  For every point $P\in T$, we also define
$$
\delta_P(T,D)=\inf_{\substack{E/T\\P\in C_T(E)}}\frac{A_T(E)}{S_D\big(E\big)}
$$
where the infimum is run over all prime divisor $E$ over $T$ such that $P\in C_T(E)$. Since $D=\sigma^*(\overline{D})$ and $K_T=\sigma^*(K_{\overline{T}}),$ we have
$$
\delta_O(\overline{T},\overline{D})=\inf_{P: O=\sigma(P)}\delta_P(T,D)
$$
% Let $\overline{T}$ be a Du Val del Pezzo surface, and let $\sigma:T\to \overline{T}$ be a minimal resolution of $\overline{T}$. We have the following commutative diagram
% $$\xymatrix{
% &T\ar[dl]_{\sigma}\ar[dr]&\\
% \overline{T}\ar[rr]& &\DP^2
% }$$
% Note that $\upsilon:\overline{T}\to \DP^2$ is a weighted blow up and $\overline{C}_2$ is in the smooth locus of $\overline{T}$.
% Let  $f\colon\widetilde{T}\to T$ be a birational morphism, let $E$ be a prime divisor over $T$.
% We will denote the~subvariety $f(E)$ by $C_X(E)$. 
% Let let $D$ be an ample divisor on $T$ and 
% $$\index{$S_D(F)$}
% S_D(E)=\frac{1}{D^2}\int\limits_0^{\tau}\mathrm{vol}\big(D-vE\big)dv\text{ and }A_T (E) = 1 + \mathrm{ord}_E(K_{\widetilde{T}} - f^*(K_T))$$
% where $\tau=\tau(E)$ is the~pseudo-effective threshold of $E$ with respect to $D$. 
% \\
% Let $O$ be a point in $\overline{T}$. We can define a polarized $\delta$-invariant as
% $$
% \delta_O(\overline{T},\overline{D})=\inf_{\substack{F/\overline{T}\\ O\in C_{\overline{T}}(F)}}\frac{A_{\overline{T}}(F)}{S_D(F)}
% $$
% where the~infimum runs over all prime divisors $F$ over the surface $\overline{T}$ such that $O\in C_{\overline{T}}(F)$. 
% \\Similarly, for the  surface $T$ and a point $P$ on $T$ we define:
% $$
% \delta_P\big(T,D\big)=\inf_{\substack{E/T\\P\in C_T(E)}}\frac{A_T(E)}{S_D\big(E\big)}
% $$
% where the~infimum runs over all prime divisors $E$ over the surface $T$ such that $P\in C_T(E)$. Note that it is clear that
% $$\delta(\overline{T},\overline{D})=\delta(T,D)\text{ and }\delta_O(\overline{T},\overline{D})=\inf_{P: O=\sigma(P)}\delta_P(T,D)$$
So, to estimate $\delta_O(\overline{T},\overline{D})$ it is enough to estimate $\delta_P(T,D)$ for $P$ all points $P$ such that $\sigma(P)=O$.\\
Let $\MC$ be a smooth curve on $T$ containing $P$. 
Set
$$
\tau(\MC)=\mathrm{sup}\Big\{v\in\mathbb{R}_{\geqslant 0}\ \big\vert\ \text{the divisor  $-K_T-v\MC$ is pseudo-effective}\Big\}.
$$
For~$v\in[0,\tau]$, let $P(v)$ be the~positive part of the~Zariski decomposition of the~divisor $-K_T-\MC$,
and let $N(v)$ be its negative part. 
Then we set $$
S\big(W^{\MC}_{\bullet,\bullet};P\big)=\frac{2}{D^2}\int_0^{\tau(\MC)} h_D(v) dv,
$$
where
$$
h_D(v)=\big(P(v)\cdot \MC\big)\times\big(N(v)\cdot \MC\big)_P+\frac{\big(P(v)\cdot \MC\big)^2}{2}.
$$
It follows from \cite{AbbanZhuang,Book} that:
\begin{equation*}\label{estimation1}
    \delta_P(T,D)\geqslant\mathrm{min}\Bigg\{\frac{1}{S_D(\MC)},\frac{1}{S(W_{\bullet,\bullet}^{\MC},P)}\Bigg\}.
\end{equation*}
We will estimate $\delta_P(T,D)$ in the following using the notations above for a suitable choice of the curve $\MC$, $\tau(\MC)$, $P(v)$ and $N(v)$ later in special cases.
\\ A similar approach was taken in \cite{logbel4-24} and \cite{chelts3-22}.

\subsection{Polarized $\delta$-invariant on Del Pezzo surface of degree $4$ with $\DA_1$  singularity.} Suppose that $\overline{T}$ has one singular point and this point is a singular point of type $\mathbb{A}_1$. Then $\eta$ is a blow up of $\DP^2$ at points $P_1$, $P_2$, $P_3$ and $P_4$ in general position and a point $P_5$ which belongs to the exceptional divisor corresponding to $P_4$ and no other negative curve. Suppose $\mathbf{E}:=L_{14}\cup L_{24}\cup L_{24}\cup E_5$. By  \cite[Section 6.2]{Denisova} we have:
    $$\delta_P(T)=\left\{
\aligned
&1\text{ if }P\in E_4,\\
&6/5\text{ if }P\in \mathbf{E}\backslash E_4,\\
&4/3\text{ if }P \text{ belongs to two curves in }\{E_1,E_2,E_3,L_{12},L_{13},L_{23},L_{45},C_2\},\\
&18/13\text{ if }P\text{ belongs to exactly one curve in } \{E_1,E_2,E_3,L_{12},L_{13},L_{23},L_{45},C_2\}\backslash \mathbf{E},\\
&3/2,\text{ otherwise}
      \endaligned
      \right.
      $$
where $E_1$, $E_2$, $E_3$, $E_4$, $E_5$ are exceptional divisors corresponding to $P_1$, $P_2$, $P_3$, $P_4$, $P_5$ respectively, $C_{2}$ is a strict transform of a $(-1)$-curve coming from the conic on $\DP^2$, $L_{ij}$ are strict transforms of the lines passing through $P_i$ and $P_j$ for $(i,j)\in\{(1,2),(1,3),(1,4),(2,3),(2,4),(3,4)\}$ and $L_{45}$ a strict transform of a $(-1)$-curve coming from a line on $\DP^2$. The dual graph of $(-1)$ and $(-2)$-curves is given in the following picture:
    \begin{center}
      \includegraphics[width=11cm]{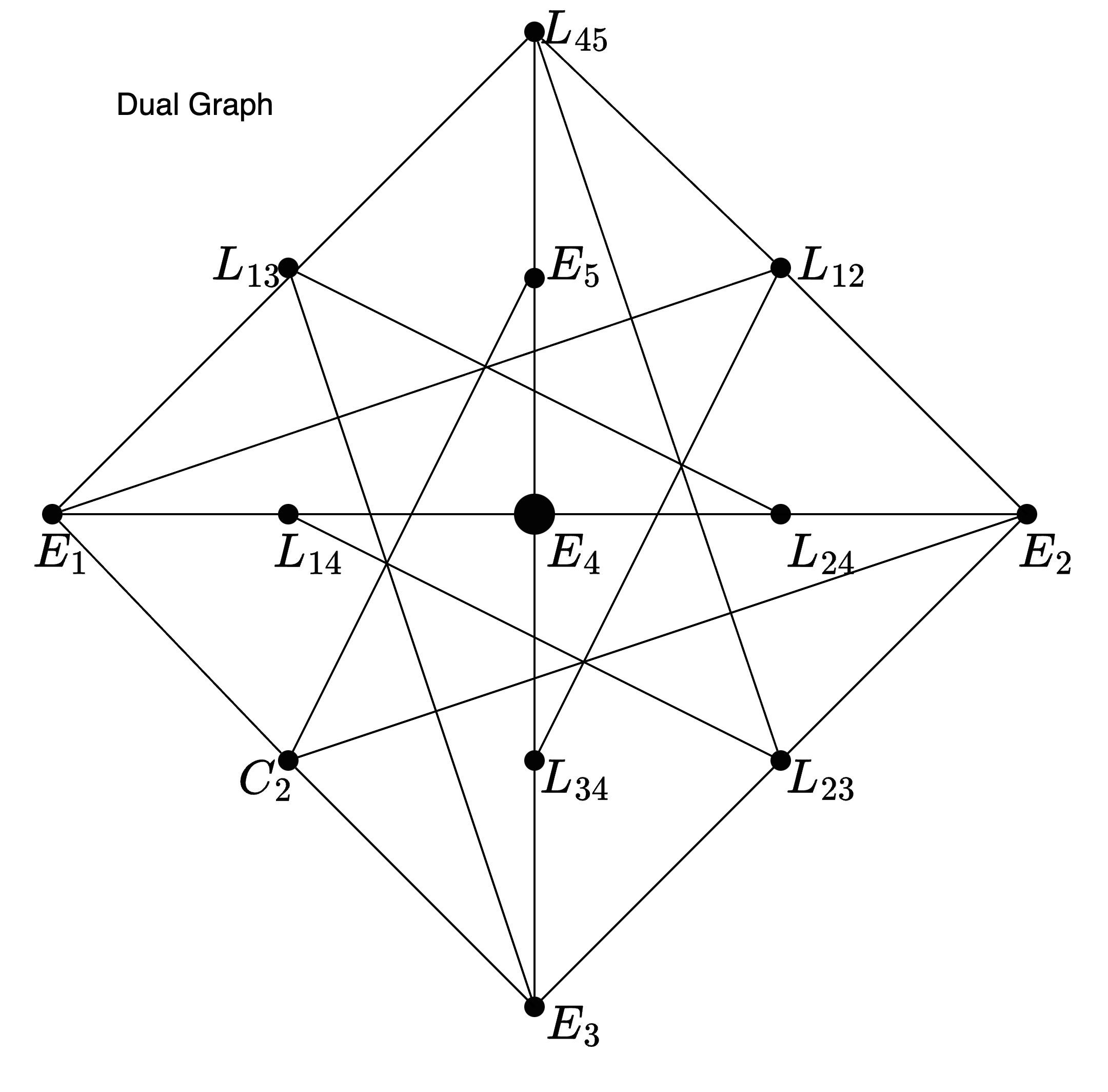}
   \end{center}
\begin{lemma}Suppose $P$ is a point on $T$ and $D=-K_{T}-(u-1)C_2$ with $D^2=5-u^2$ then
$$\delta_{P}(T,D)= 
\left\{
\aligned
&\frac{15 - 3 u^2}{16 + 3 u - 9 u^2 + 2 u^3} \ \text{ for }P\in E_4\backslash E_5  \text{ and }u\in[1,2]\\
&\frac{15 - 3 u^2}{11 - u^3}\text{ for }P\in E_5\backslash E_4\text{ and }u\in[1,2]\\
&\frac{15 - 3 u^2}{3 u^3 - 12 u^2 + 6 u + 13}\ \text{ for }P\in L_{14}\backslash (E_4\cup E_5 \cup E_1)\ \text{ and }u\in[1,2]\\
\endaligned
\right.$$
and 
$$\delta_{P}(T,D)\ge 
\left\{
\aligned
&\frac{15 - 3 u^2}{3 u^3 - 12 u^2 + 6 u + 13} \ \text{ for }P=E_4\cap E_5 \text{ and }u\in [1,a]\\
&\frac{15 - 3 u^2}{11 - u^3}\ \text{ for }P= E_4\cap E_5 \text{ and }u\in [a,2]\\
\endaligned
\right.$$
and 
$$\delta_{P}(T,D)\ge 
\left\{
\aligned
&\frac{15 - 3 u^2}{3 u^3 - 12 u^2 + 6 u + 13}\ \text{ for }P=L_{14}\cap E_1 \text{ and }u\in [1,b]\\
&\frac{2(15 - 3 u^2)}{19 - 2u^3}\ \text{ for }P=L_{14}\cap E_1 \text{ and }u\in [b,3/2]\\
&\frac{15 - 3 u^2}{3u^3 - 18u^2 + 27u - 4}\text{ for }P=L_{14}\cap E_1 \text{ and }u\in [3/2,2]
\endaligned
\right.$$
where $a$ is a root of $3u^3 - 9u^2 + 3u + 5$ on $[1,2]$,  $b$ is a root of $8u^3 - 24u^2 + 12u + 7$ on $[1,3/2]$. Note that  $a\in [1.355,1.356]$, $b\in [1.261,1.262]$.
\end{lemma}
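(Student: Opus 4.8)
The plan is to run the Abban--Zhuang flag method on the surface $T$: for each point $P$ in the statement we choose a suitable smooth curve $\mathcal{C}\subset T$ through $P$ and combine the inequality recalled above, $\delta_P(T,D)\geq\min\{1/S_D(\mathcal{C}),\,1/S(W^{\mathcal{C}}_{\bullet,\bullet};P)\}$, with the elementary upper bound $\delta_P(T,D)\leq A_T(\mathcal{C})/S_D(\mathcal{C})$ obtained by taking $F=\mathcal{C}$ in the infimum. The natural choices are $\mathcal{C}=E_4$ when $P\in E_4$, $\mathcal{C}=E_5$ when $P\in E_5\setminus E_4$, and $\mathcal{C}=L_{14}$ when $P\in L_{14}\setminus E_4$. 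For the three equality statements the curve $\mathcal{C}$ turns out to be the worst divisor through $P$: for a general point of $\mathcal{C}$ the local term $(N(v)\cdot\mathcal{C})_P$ vanishes, and at the finitely many special points on $\mathcal{C}$ one checks directly that it does not spoil the comparison $S_D(\mathcal{C})\geq S(W^{\mathcal{C}}_{\bullet,\bullet};P)$; hence the minimum in the Abban--Zhuang bound equals $1/S_D(\mathcal{C})$, it is attained by the prime divisor $\mathcal{C}$ itself, and we obtain $\delta_P(T,D)=1/S_D(\mathcal{C})$. For the two special points $E_4\cap E_5$ and $L_{14}\cap E_1$ only the lower bound is asserted, and there $P$ lies on two negative curves, so $(N(v)\cdot\mathcal{C})_P$ contributes a genuine local correction that makes these bounds stronger than the general-point ones.

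The core computation is the Zariski decomposition of $D-v\mathcal{C}=-K_T-(u-1)C_2-v\mathcal{C}$ for $v\in[0,\tau(\mathcal{C})]$, with $u\in[1,2]$ a parameter. Writing $-K_T$, $C_2$ and the curves in play in the basis given by the class of a line pulled back from $\DP^2$ and the exceptional curves $E_1,\dots,E_5$ (recall $P_5$ is infinitely near $P_4$, so $E_4$ is the $(-2)$-curve), one intersects $D-v\mathcal{C}$ with each negative curve of the dual graph, records the value of $v$ at which that curve first becomes negative --- a rational function of $u$ --- and peels it into the negative part $N(v)$. Each such event opens a new linear-in-$v$ chamber for $P(v)$; the places where two of these thresholds coincide are where the closed-form expressions change, and they are cut out by the cubics $3u^3-9u^2+3u+5$ and $8u^3-24u^2+12u+7$, which is the origin of the splits at their roots $u=a$ and $u=b$, together with the wall $u=3/2$. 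Once $P(v)$ and $N(v)$ are known on each chamber, one evaluates $S_D(\mathcal{C})=\frac1{D^2}\int_0^{\tau}P(v)^2\,dv$ and $S(W^{\mathcal{C}}_{\bullet,\bullet};P)=\frac2{D^2}\int_0^{\tau}\big((P(v)\cdot\mathcal{C})(N(v)\cdot\mathcal{C})_P+\tfrac12(P(v)\cdot\mathcal{C})^2\big)\,dv$ as piecewise polynomials in $u$ and reads off the stated rational functions.

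The main obstacle is the combinatorial bookkeeping of the Zariski chamber structure rather than any single integral: $T$ carries over a dozen interacting $(-1)$- and $(-2)$-curves, and one must be sure that no extra curve slips unnoticed into $N(v)$ and that the ordering of the $u$-dependent thresholds is exactly as claimed. I would handle this by fixing $u$, running the peeling algorithm with the intersection matrix from the dual graph, and then verifying that the combinatorial type of the decomposition is constant on each asserted $u$-subinterval, so that $u=a$, $u=b$, $u=3/2$, $u=2$ are precisely the parameter values where two peeling events coincide or where $\tau(\mathcal{C})$ meets a wall. A secondary delicate point is the local term $(N(v)\cdot\mathcal{C})_P$ at $E_4\cap E_5$ and $L_{14}\cap E_1$: one must identify exactly which components of $N(v)$ pass through $P$ and with what multiplicity along $\mathcal{C}$, since it is that correction which produces the extra strength of the special-point bounds.
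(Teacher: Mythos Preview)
Your plan is exactly the paper's: take $\mathcal{C}=E_4$, $E_5$, or $L_{14}$ according to where $P$ lies, compute the Zariski decomposition of $D-v\mathcal{C}$ chamber by chamber, integrate to get $S_D(\mathcal{C})$ and $S(W^{\mathcal{C}}_{\bullet,\bullet};P)$, and combine the Abban--Zhuang lower bound with the trivial upper bound $\delta_P\le 1/S_D(\mathcal{C})$.

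One correction to your description of the mechanism: the splits at $u=a$ and $u=b$ do \emph{not} come from coincidences of Zariski peeling thresholds. Only the wall $u=3/2$ (for $\mathcal{C}=L_{14}$, where the thresholds $v=1$ and $v=4-2u$ meet) arises that way. The cubics defining $a$ and $b$ are instead the numerators of $S_D(\mathcal{C})-S(W^{\mathcal{C}}_{\bullet,\bullet};P)$ at the special points: $3u^3-9u^2+3u+5=(16+3u-9u^2+2u^3)-(11-u^3)$ for $P=E_4\cap E_5$, and $8u^3-24u^2+12u+7=2(3u^3-12u^2+6u+13)-(19-2u^3)$ for $P=L_{14}\cap E_1$. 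Thus $a$ and $b$ are the $u$-values at which the minimum in the Abban--Zhuang bound switches between its two branches, not values at which the Zariski chamber structure changes. Your computation would still land on the right answer, since you would see this when you actually take the minimum, but the explanation of where the piecewise structure originates should be adjusted.
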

\begin{proof}
{\bf Step 1.} Suppose $P\in E_4$. In this case we set $\MC= E_4$. Then $\tau(\MC)=3-u$. The Zariski Decomposition of the divisor $D-vE_4$ is given by:
$$P(v)=
\begin{cases}-K_{T}-(u-1)C_2-vE_4\text{ for }v\in[0,2-u]\\
-K_{T}-(u-1)C_2-vE_4-(u+v-2)E_5\text{ for }v\in[2-u,1]\\
-K_{T}-(u-1)C_2-vE_4-(u+v-2)E_5-(v-1)(L_{14}+L_{24}+L_{34})\text{ for }v\in[1,3-u]
\end{cases}$$
and 
$$N(v)=
\begin{cases}
0\text{ for }v\in[0,2-u]\\
(u+v-2)E_5\text{ for }v\in[2-u,1]\\
(u+v-2)E_5+(v-1)(L_{14}+L_{24}+L_{34})\text{ for }v\in[1,3-u]
\end{cases}$$
Moreover, 
$$P(v)^2=\begin{cases}5 -u^2 - 2v^2 \text{ for }v\in[0,2-u]\\
9+ 2uv - 4u - 4v - v^2\text{ for }v\in[2-u,1]\\
2(2 -v)(3 - u - v)\text{ for }v\in[1,3-u]
\end{cases}$$\text{ and }$$P(v)\cdot \MC=\begin{cases}
2v\text{ for }v\in[0,2-u]\\
2-u+v\text{ for }v\in[2-u,1]\\
5-u-2v\text{ for }v\in[1,3-u]
\end{cases}$$
Thus,
\begin{align*}
    S_D(\MC)=\frac{1}{5-u^2}\Big(\int_0^{2-u} 5 -u^2 - 2v^2 dv&+\int_{2-u}^1 9+ 2uv - 4u - 4v - v^2 dv+\\
    &+\int_{1}^{3-u} 2(2 -v)(3 - u - v) dv\Big)=\frac{16 + 3 u - 9 u^2 + 2 u^3}{15 - 3 u^2}
\end{align*}
Thus, $\delta_P(T,D)\le \frac{15 - 3 u^2}{16 + 3 u - 9 u^2 + 2 u^3}$ for $P\in E_4$. Note that we have:
\begin{itemize}
    \item if $P\in E_4\backslash (E_5\cup L_{14}\cup L_{24}\cup L_{34})$
$$h_D(v)=\begin{cases}2v^2\text{ for }v\in[0,2-u]\\
\frac{(2-u+v)^2}{2}\text{ for }v\in[2-u,1]\\
\frac{(5-u-2v)^2}{2}\text{ for }v\in[1,3-u]
\end{cases}$$
\item if $P=E_4\cap E_5$
$$h_D(v)=\begin{cases}2v^2\text{ for }v\in[0,2-u]\\
\frac{(2 - u + v) (u + 3 v - 2)}{2}\text{ for }v\in[2-u,1]\\
\frac{(u + 1) (5-u - 2 v)}{2}\text{ for }v\in[1,3-u]
\end{cases}$$
    \item if $P\in E_4\cap (L_{14}\cup L_{24}\cup L_{34})$
$$h_D(v)=\begin{cases}2v^2\text{ for }v\in[0,2-u]\\
\frac{(2-u+v)^2}{2}\text{ for }v\in[2-u,1]\\
\frac{(3 - u) (5-u - 2 v)}{2}\text{ for }v\in[1,3-u]
\end{cases}$$
\end{itemize}
So we have 
\begin{itemize}
    \item if $P\in E_4\backslash (E_5\cup L_{14}\cup L_{24}\cup L_{34})$ then
    \begin{align*}
    S_D(W_{\bullet,\bullet}^{\MC};P)=\frac{2}{5-u^2}\Big(\int_0^{2-u} 2v^2 dv&+\int_{2-u}^1 \frac{(2-u+v)^2}{2} dv+\int_{1}^{3-u} \frac{(5-u-2v)^2}{2} dv\Big)=\\
    &=\frac{9 + 6 u - 9 u^2 + 2 u^3}{15 - 3 u^2}\le \frac{16 + 3 u - 9 u^2 + 2 u^3}{15 - 3 u^2}
\end{align*}
   \item if $P=E_4\cap E_5$ then
    \begin{align*}
    S_D(W_{\bullet,\bullet}^{\MC};P)=\frac{2}{5-u^2}\Big(\int_0^{2-u} 2v^2 dv&+\int_{2-u}^1\frac{(2 - u + v) (u + 3 v - 2)}{2} dv+\\
    &+\int_{1}^{3-u} \frac{(u + 1) (5-u - 2 v)}{2} dv\Big)=\frac{11 - u^3}{15-3u^2}
\end{align*}
 \item if $P\in E_4\cap (L_{14}\cup L_{24}\cup L_{34})$ then
    \begin{align*}
    S_D(W_{\bullet,\bullet}^{\MC};P)=\frac{2}{5-u^2}\Big(\int_0^{2-u} 2v^2 dv&+\int_{2-u}^1 \frac{(2-u+v)^2}{2} dv+\\
    &+\int_{1}^{3-u} \frac{(3 - u) (5-u - 2 v)}{2}dv\Big)=\\
    &=\frac{13 + 3 u^3 - 12 u^2 + 6 u}{15 - 3u^2}\le \frac{16 + 3 u - 9 u^2 + 2 u^3}{15 - 3 u^2}
\end{align*}
\end{itemize}
We obtain that  
$$\delta_{P}(T,D)= \frac{15 - 3 u^2}{16 + 3 u - 9 u^2 + 2 u^3}\text{ for }P\in E_4\backslash E_5 \text{ and }u\in[1,2]$$
and 
$$\delta_{P}(T,D)\ge  
\left\{
\aligned
&\frac{15 - 3 u^2}{16 + 3 u - 9 u^2 + 2 u^3}\text{ for }P=E_4\cap E_5 \text{ and }u\in [1,a]\\
&\frac{15 - 3 u^2}{11 - u^3}\text{ for }P= E_4\cap E_5 \text{ and }u\in [a,2]
\endaligned
\right.$$
where $a$ is a root of $3u^3 - 9u^2 + 3u + 5$ on $[1,2]$. Note that $a\in [1.355,1.356]$.\\
{\bf Step 2.} Suppose  $P\in E_5$. In this case we set $\MC= E_5$. Then $\tau(\MC)=2$. The Zariski Decomposition of the divisor $D-vE_5$ is given by:
$$P(v)=
\begin{cases}-K_{T}-(u-1)C_2-vE_5-\frac{v}{2}E_4\text{ for }v\in[0,1]\\
-K_{T}-(u-1)C_2-vE_5-\frac{v}{2}E_4-(v-1)L_{45}\text{ for }v\in[1,u]\\
-K_{T}-(u-1)C_2-vE_5-\frac{v}{2}E_4-(v-1)L_{45}-(v-u)C_2\text{ for }v\in[u,2]
\end{cases}$$
and 
$$N(v)=
\begin{cases}\frac{v}{2}E_4\text{ for }v\in[0,1]\\
\frac{v}{2}E_4+(v-1)L_{45}\text{ for }v\in[1,u]\\
\frac{v}{2}E_4+(v-1)L_{45}+(v-u)C_2\text{ for }v\in[u,2]
\end{cases}$$
Moreover, 
$$P(v)^2=
\begin{cases} 5 - 4v + 2uv - u^2 - v^2/2 \text{ for }v\in[0,1]\\
6 - 6v + v^2/2 + 2uv - u^2\text{ for }v\in[1,u]\\
\frac{3(2 - v)^2}{2}\text{ for }v\in[u,2]
\end{cases}$$
and
$$P(v)\cdot \MC=\begin{cases}
2-u+v/2\text{ for }v\in[0,1]\\
3-u-v/2\text{ for }v\in[1,u]\\
3-3v/2\text{ for }v\in[u,2]
\end{cases}$$
Thus,
\begin{align*}
    S_D(\MC)=\frac{1}{5-u^2}\Big(\int_0^{1} 5 - 4v + 2uv - u^2 - v^2/2 dv&+\int_{1}^{u} 6 - 6v + v^2/2 + 2uv - u^2 dv+\\
    &+\int_{u}^{2} \frac{3(2 - v)^2}{3} dv\Big)=\frac{11 - u^3}{15 - 3 u^2}
\end{align*}
Thus, $\delta_P(T,D)\le \frac{15 - 3 u^2}{11 - u^3}$ for $P\in E_5$. Note that we have:
\begin{itemize}
\item if $P\in E_5\backslash (E_4\cup C_2\cup L_{45})$ then
$$h_D(v)=\begin{cases}
\frac{(2-u+v/2)^2}{2}\text{ for }v\in[0,1]\\
\frac{(3-u-v/2)^2}{2}\text{ for }v\in[1,u]\\
\frac{(3-3v/2)^2}{2}\text{ for }v\in[u,2]
\end{cases}$$
\item if $P=E_5\cap C_2$ then
$$h_D(v)=\begin{cases}
\frac{(2-u+v/2)^2}{2}\text{ for }v\in[0,1]\\
\frac{(3-u-v/2)^2}{2}\text{ for }v\in[1,u]\\
\frac{ 3 (2 - v) (6 -4 u + v)}{8}\text{ for }v\in[u,2]
\end{cases}$$
\item if $P= E_5\cap L_{45}$ then
$$h_D(v)=\begin{cases}
\frac{(2-u+v/2)^2}{2}\text{ for }v\in[0,1]\\
\frac{(6-2 u - v) (2-2 u + 3 v)}{8}\text{ for }v\in[1,u]\\
\frac{3(2 - v) (v + 2)}{8}\text{ for }v\in[u,2]
\end{cases}$$
\end{itemize}
So we have 
\begin{itemize}
    \item if $P\in  E_5\backslash (E_4\cup C_2\cup L_{45})$ then
    \begin{align*}
    S_D(W_{\bullet,\bullet}^{\MC};P)=\frac{2}{5-u^2}&\Big(\int_0^{1} \frac{(2-u+v/2)^2}{2} dv+\int_{1}^u \frac{(3-u-v/2)^2}{2} dv+\\
    &+\int_{u}^{2} \frac{(3-3v/2)^2}{2} dv\Big)=\frac{21 + 6 u - 18 u^2 + 5 u^3}{2(15 - 3 u^2)}\le \frac{11 - u^3}{15 - 3 u^2}
\end{align*}
   \item if $P=  E_5\cap C_2$ then
    \begin{align*}
    S_D(W_{\bullet,\bullet}^{\MC};P)=\frac{2}{5-u^2}&\Big(\int_0^{1} \frac{(2-u+v/2)^2}{2} dv+\int_{1}^u \frac{(3-u-v/2)^2}{2} dv+\\
    &+\int_{u}^{2} \frac{ 3 (2 - v) (6 -4 u + v)}{8} dv\Big)=\frac{45 - 30 u + 2 u^3}{2(15 - 3 u^2)}\le \frac{11 - u^3}{15 - 3 u^2}
\end{align*}
  \item if $P= E_5\cap L_{45}$ then
    \begin{align*}
    S_D(W_{\bullet,\bullet}^{\MC};P)=\frac{2}{5-u^2}&\Big(\int_0^{1} \frac{(2-u+v/2)^2}{2} dv+\int_{1}^u \frac{(6-2 u - v) (2-2 u + 3 v)}{8}dv+\\
    &+\frac{3(2 - v) (v + 2)}{8} dv\Big)=\frac{26 - 12 u^2 + 3 u^3}{2(15 - 3 u^2)}\le \frac{11 - u^3}{15 - 3 u^2}
\end{align*}
\end{itemize}
We obtain that $$\delta_P(T,D)= \frac{15 - 3 u^2}{11 - u^3}\text{ for }P\in E_5\backslash E_4\text{ and }u\in[1,2].$$\\
{\bf Step 3.1.} Suppose $P\in L_{14}\cup L_{24}\cup L_{34}$ and $u\in [1,3/2]$. In this case we set $\MC= L_{14}$. Then $\tau(\MC)=3-u$. Without loss of generality, we can assume that $P\in L_{14}$. The Zariski Decomposition of the divisor $D-vL_{14}$ is given by:
$$P(v)=
\begin{cases}D-vL_{14}-\frac{v}{2}E_4\text{ for }v\in[0,2-u]\\
D-vL_{14}-\frac{v}{2}E_4-(u+v-2)E_1\text{ for }v\in[2-u,1]\\
D-vL_{14}-\frac{v}{2}E_4-(u+v-2)E_1-(v-1)L_{23}\text{ for }v\in[1,4-2u]\\
D-vL_{14}-(u+v-2)(E_1+E_4)-(v-1)L_{23}-(2u+v-4)E_5\text{ for }v\in[4-2u,3-u]
\end{cases}$$
and 
$$N(v)=
\begin{cases}\frac{v}{2}E_4\text{ for }v\in[0,2-u]\\
\frac{v}{2}E_4+(u+v-2)E_1\text{ for }v\in[2-u,1]\\
\frac{v}{2}E_4+(u+v-2)E_1+(v-1)L_{23}\text{ for }v\in[1,4-2u]\\
(u+v-2)(E_1+E_4)+(v-1)L_{23}+(2u+v-4)E_5\text{ for }v\in[4-2u,3-u]
\end{cases}$$
Moreover
$$P(v)^2=
\begin{cases}5 - 2v - v^2/2 - u^2\text{ for }v\in[0,2-u]\\
9 - 4u - 6v + v^2/2 + 2uv\text{ for }v\in[2-u,1]\\
\frac{(v - 2)(3v + 4u - 10)}{2}\text{ for }v\in[1,4-2u]\\
2( u + v-3)^2\text{ for }v\in[4-2u,3-u]
\end{cases}$$
and
$$P(v)\cdot\MC=
\begin{cases}
v/2 + 1\text{ for }v\in[0,2-u]\\
3 - u - v/2\text{ for }v\in[2-u,1]\\
4 - u - 3v/2\text{ for }v\in[1,4-2u]\\
2(3 - u - v)\text{ for }v\in[4-2u,3-u]
\end{cases}$$
Thus,
\begin{align*}
    S_D(\MC)&=\frac{1}{5-u^2}\Big(\int_0^{2-u} 5 - 2v - v^2/2 - u^2 dv 
    +\int_{2-u}^{1} 9 - 4u - 6v + v^2/2 + 2uv dv+\\
    &+\int_{1}^{4-2u} \frac{(v - 2)(3v + 4u - 10)}{2} dv+\int_{4-2u}^{3-u} 2( u + v-3)^2 dv\Big)=\frac{3 u^3 - 12 u^2 + 6 u + 13}{15 - 3 u^2}
\end{align*}
Thus, $\delta_P(T,D)\le \frac{15 - 3 u^2}{3 u^3 - 12 u^2 + 6 u + 13}$ for $P\in L_{14}$. Note that we have:
\begin{itemize}
\item if $P\in L_{14}\backslash (E_4\cup E_1\cup L_{23}\cup E_5)$ then
$$h_D(v)=\begin{cases}
\frac{(v/2 + 1)^2}{2}\text{ for }v\in[0,2-u]\\
\frac{(3 - u - v/2)^2}{2}\text{ for }v\in[2-u,1]\\
\frac{(4 - u - 3v/2)^2}{2}\text{ for }v\in[1,4-2u]\\
2(3 - u - v)^2\text{ for }v\in[4-2u,3-u]
\end{cases}$$
\item if $P =  L_{14} \cap E_1$ then
$$h_D(v)=\begin{cases}
\frac{(v/2 + 1)^2}{2}\text{ for }v\in[0,2-u]\\
\frac{(6 - 2 u - v ) (2 u + 3 v - 2)}{8}\text{ for }v\in[2-u,1]\\
\frac{(8 -2 u - 3 v) (2 u + v)}{8}\text{ for }v\in[1,4-2u]\\
(3-u - v)\text{ for }v\in[4-2u,3-u]
\end{cases}$$
\item if $P = L_{14}\cap L_{23}$ then
$$h_D(v)=\begin{cases}
\frac{(v/2 + 1)^2}{2}\text{ for }v\in[0,2-u]\\
\frac{(3 - u - v/2)^2}{2}\text{ for }v\in[2-u,1]\\
\frac{(8 -2 u - 3 v) (4 -2 u + v)}{8}\text{ for }v\in[1,4-2u]\\
2 (2 - u) (3 - u - v)\text{ for }v\in[4-2u,3-u]
\end{cases}$$
\end{itemize}
So we have 
\begin{itemize}
    \item if $P\in  L_{14}\backslash (E_4\cup E_1\cup L_{23}\cup E_5)$ then
    \begin{align*}
    S_D(W_{\bullet,\bullet}^{\MC};P)=\frac{2}{5-u^2}&\Big(\int_0^{2-u} \frac{(v/2 + 1)^2}{2} dv+\int_{2-u}^1 \frac{(3 - u - v/2)^2}{2} dv+\\
    &+\int_{1}^{4-2u} \frac{(4 - u - 3v/2)^2}{2} dv+ \int_{4-2u}^{3-u} 2(3 - u - v)^2 dv\Big)=\\
    &=\frac{21-u^3 - 6u}{2(15 - 3 u^2)}\le \frac{3 u^3 - 12 u^2 + 6 u + 13}{15 - 3 u^2}
    \end{align*}
    \item if $P  = L_{14}\cap E_1$ then
    \begin{align*}
    S_D(W_{\bullet,\bullet}^{\MC}&;P)=\frac{2}{5-u^2}\Big(\int_0^{2-u} \frac{(v/2 + 1)^2}{2} dv+\int_{2-u}^1 \frac{(6 - 2 u - v ) (2 u + 3 v - 2)}{8} dv+\\
    &+\int_{1}^{4-2u} \frac{(8 -2 u - 3 v) (2 u + v)}{8} dv+ \int_{4-2u}^{3-u}(3-u - v) dv\Big)=\frac{19 - 2u^3}{2(15 - 3 u^2)}
    \end{align*}
        \item if $P  = L_{14}\cap L_{23}$ then
    \begin{align*}
    S_D(W_{\bullet,\bullet}^{\MC};P)=\frac{2}{5-u^2}&\Big(\int_0^{2-u} \frac{(v/2 + 1)^2}{2} dv+\int_{2-u}^1\frac{(3 - u - v/2)^2}{2} dv+\\
    &+\int_{1}^{4-2u} \frac{(8 -2 u - 3 v) (4 -2 u + v)}{8} dv+ \int_{4-2u}^{3-u} 2 (2 - u) (3 - u - v) dv\Big)=\\
    &=\frac{26 - 12u^2 + 3u^3}{2(15 - 3 u^2)}\le \frac{3 u^3 - 12 u^2 + 6 u + 13}{15 - 3 u^2}
    \end{align*}
\end{itemize}
We obtain that  
$$\delta_{P}(T,D)= 
\frac{15 - 3 u^2}{3 u^3 - 12 u^2 + 6 u + 13}\text{ for }P\in L_{14}\backslash (E_4\cup E_5 \cup E_1)
\text{ and }u\in[1,3/2]$$
and
$$\delta_{P}(T,D)\ge
\left\{
\aligned
&\frac{15 - 3 u^2}{3 u^3 - 12 u^2 + 6 u + 13}\text{ for }P=L_{14}\cap E_1 \text{ and }u\in [1,b]\\
&\frac{2(15 - 3 u^2)}{19 - 2u^3}\text{ for }P=L_{14}\cap E_1 \text{ and }u\in [b,3/2]
\endaligned
\right.$$
where $b$ is a root of $8u^3 - 24u^2 + 12u + 7$ on $[1,3/2]$. Note that $b\in [1.261,1.262]$.\\
{\bf Step 3.2.} Suppose $P\in L_{14}\cup L_{24}\cup L_{34}$ and $u\in [3/2,2]$.  In this case we set $\MC= L_{14}$. Then $\tau(\MC)=3-u$. Without loss of generality, we can assume that $P\in L_{14}$. The Zariski Decomposition of the divisor $D-vL_{14}$ is given by:
$$P(v)=
\begin{cases}D-vL_{14}-\frac{v}{2}E_4\text{ for }v\in[0,2-u]\\
D-vL_{14}-\frac{v}{2}E_4-(u+v-2)E_1\text{ for }v\in[2-u,4-2u]\\
D-vL_{14}-(u+v-2)(E_1+E_4)-(2u+v-4)E_5\text{ for }v\in[4-2u,1]\\
D-vL_{14}-(u+v-2)(E_1+E_4)-(v-1)L_{23}-(2u+v-4)E_5\text{ for }v\in[1, 3-u]
\end{cases}$$
and 
$$N(v)=
\begin{cases}
\frac{v}{2}E_4\text{ for }v\in[0,2-u]\\
\frac{v}{2}E_4+(u+v-2)E_1\text{ for }v\in[2-u,4-2u]\\
(u+v-2)(E_1+E_4)+(2u+v-4)E_5\text{ for }v\in[4-2u,1]\\
(u+v-2)(E_1+E_4)+(v-1)L_{23}+(2u+v-4)E_5\text{ for }v\in[1, 3-u]
\end{cases}$$
Moreover
$$P(v)^2=
\begin{cases}5 - 2v - v^2/2 - u^2\text{ for }v\in[0,2-u]\\
9 - 4u - 6v + v^2/2 + 2uv\text{ for }v\in[2-u,4-2u]\\
2u^2 + 4uv + v^2 - 12u - 10v + 17\text{ for }v\in[4-2u,1]\\
2(u + v-3)^2\text{ for }v\in[1,3-u]
\end{cases}$$
and
$$P(v)\cdot\MC=
\begin{cases}
1 + v/2\text{ for }v\in[0,2-u]\\
3 - u - v/2\text{ for }v\in[2-u,4-2u]\\
5 - 2u - v\text{ for }v\in[4-2u,1]\\
2(3 - u - v)\text{ for }v\in[1,3-u]
\end{cases}$$
Thus,
\begin{align*}
    S_D(\MC)&=\frac{1}{5-u^2}\Big(\int_0^{2-u} 5 - 2v - v^2/2 - u^2 dv 
    +\int_{2-u}^{4-2u} 9 - 4u - 6v + v^2/2 + 2uv dv+\\
    &+\int_{4-2u}^1 2u^2 + 4uv + v^2 - 12u - 10v + 17 dv+\int_{1}^{3-u} 2( u + v-3)^2 dv\Big)=\frac{3u^3 - 12u^2 + 6u + 13}{15 - 3 u^2}
\end{align*}
Thus, $\delta_P(T,D)\le \frac{15 - 3 u^2}{3u^3 - 12u^2 + 6u + 13}$ for $P\in L_{14}$. Note that we have:
\begin{itemize}
\item if $P\in L_{14}\backslash (E_4\cup E_1\cup L_{23}\cup E_5)$ then
$$h_D(v)=\begin{cases}
\frac{(1+ v/2)^2}{2}\text{ for }v\in[0,2-u]\\
\frac{(3 - u - v/2)^2}{2}\text{ for }v\in[2-u,4-2u]\\
\frac{(5 - 2u - v)^2}{2}\text{ for }v\in[4-2u,1]\\
2(3 - u - v)^2\text{ for }v\in[1,3-u]
\end{cases}$$
\item if $P = L_{14} \cap E_1$ then
$$h_D(v)=\begin{cases}
\frac{(1+ v/2)^2}{2}\text{ for }v\in[0,2-u]\\
\frac{ (6 -2 u - v ) (2 u + 3 v - 2)}{8}\text{ for }v\in[2-u,4-2u]\\
\frac{(v + 1) (5 -2 u - v)}{2}\text{ for }v\in[4-2u,1]\\
2 (3-u - v)\text{ for }v\in[1,3-u]
\end{cases}$$
\item if $P= L_{14}\cap L_{23}$ then
$$h_D(v)=\begin{cases}
\frac{(1+ v/2)^2}{2}\text{ for }v\in[0,2-u]\\
\frac{(3 - u - v/2)^2}{2}\text{ for }v\in[2-u,4-2u]\\
\frac{(5 - 2u - v)^2}{2}\text{ for }v\in[4-2u,1]\\
2 (2 - u) (3 -u - v)\text{ for }v\in[1,3-u]
\end{cases}$$
\end{itemize}
\begin{itemize}
    \item if $P\in  L_{14}\backslash (E_4\cup E_1\cup L_{23}\cup E_5)$ then
    \begin{align*}
    S_D(W_{\bullet,\bullet}^{\MC};P)=\frac{2}{5-u^2}&\Big(\int_0^{2-u} \frac{(v/2 + 1)^2}{2} dv+\int_{2-u}^{4-2u} \frac{(3 - u - v/2)^2}{2} dv+\\
    &+\int_{4-2u}^1 \frac{(5-2u-v)^2}{2} dv+ \int_{1}^{3-u} 2(3 - u - v)^2 dv\Big)=\\
    &=\frac{7u^3 - 36u^2 + 48u - 6}{2(15 - 3 u^2)}\le \frac{3 u^3 - 12 u^2 + 6 u + 13}{15 - 3 u^2}
    \end{align*}
    \item if $P = L_{14}\cap E_1$ then
    \begin{align*}
    S_D(W_{\bullet,\bullet}^{\MC}&;P)=\frac{2}{5-u^2}\Big(\int_0^{2-u} \frac{(v/2 + 1)^2}{2} dv+\int_{2-u}^{4-2u} \frac{ (6 -2 u - v ) (2 u + 3 v - 2)}{8} dv+\\
    &+\int_{4-2u}^1 \frac{(v + 1) (5 -2 u - v)}{2} dv+ \int_{1}^{3-u} 2 (3-u - v) dv\Big)=\frac{3u^3 - 18u^2 + 27u - 4}{15 - 3 u^2}
    \end{align*}
    \item if $P = L_{14}\cap L_{23}$ then
    \begin{align*}
    S_D(W_{\bullet,\bullet}^{\MC};P)=\frac{2}{5-u^2}&\Big(\int_0^{2-u} \frac{(v/2 + 1)^2}{2} dv+\int_{2-u}^{4-2u} \frac{(3 - u - v/2)^2}{2} dv+\\
    &+\int_{4-2u}^1 \frac{(5-2u-v)^2}{2} dv+ \int_{1}^{3-u} 2 (2 - u) (3 -u - v) dv\Big)=\\
    &=\frac{3u^3 - 12u^2 + 26}{2(15 - 3 u^2)}\le \frac{3 u^3 - 12 u^2 + 6 u + 13}{15 - 3 u^2}
    \end{align*}
\end{itemize}
We obtain that 
$$\delta_{P}(T,D)= \frac{15 - 3 u^2}{3 u^3 - 12 u^2 + 6 u + 13}\text{ for }P\in L_{14}\backslash (E_1\cup E_4 \cup E_5) \text{ and }u\in[3/2,2]$$
and
$$\delta_{P}(T,D)\ge \frac{15 - 3 u^2}{3u^3 - 18u^2 + 27u - 4}\text{ for }P=L_{14}\cap E_1 \text{ and }u\in [3/2,2]$$
\end{proof}
\begin{corollary}\label{corA1}
Let $P$ be a point in $T$ that is contained in $L_{12}\cup L_{24}\cup L_{34}\cup E_4 \cup E_5$ then
$$\delta_{P}(T,D)\ge
\left\{
\aligned
&\frac{15 - 3 u^2}{16 + 3 u - 9 u^2 + 2 u^3}\text{ for }u\in [1,a],\\
&\frac{15 - 3 u^2}{11 - u^3}\text{ for }u\in [a,2]
\endaligned
\right.$$
\end{corollary}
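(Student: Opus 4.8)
The plan is to read the Corollary directly off the preceding Lemma, by comparing each lower bound it supplies with the function $f$ whose branches are $\frac{15-3u^2}{16+3u-9u^2+2u^3}$ on $[1,a]$ and $\frac{15-3u^2}{11-u^3}$ on $[a,2]$. Since the points $P_1,P_2,P_3$ play symmetric roles in the construction of $T$, every point of $L_{14}\cup L_{24}\cup L_{34}\cup E_4\cup E_5$ falls into one of the cases treated in the Lemma: $P\in E_4\backslash E_5$, $P=E_4\cap E_5$, $P\in E_5\backslash E_4$, $P\in L_{14}\backslash(E_1\cup E_4\cup E_5)$, or $P=L_{14}\cap E_1$ (together with the two points symmetric to this last one). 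In each of these the Lemma yields $\delta_P(T,D)\ge\frac{15-3u^2}{g(u)}$ on a subinterval of $[1,2]$, where $g$ is an explicit polynomial that is positive there; since in addition $15-3u^2>0$ on $[1,2]$, the desired inequality $\delta_P(T,D)\ge f(u)$ is equivalent to $g(u)\le 16+3u-9u^2+2u^3$ for $u\in[1,a]$ and $g(u)\le 11-u^3$ for $u\in[a,2]$.

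The numerical fact I would isolate first is that the cubic $3u^3-9u^2+3u+5$ equals $2$ at $u=1$ and $-1$ at $u=2$ and has a unique zero $a$ in $[1,2]$ (its derivative $9u^2-18u+3$ has a single zero there), hence is $\ge0$ on $[1,a]$ and $\le0$ on $[a,2]$. This settles the case $P\in E_4\backslash E_5$, where $g=16+3u-9u^2+2u^3$ so that the comparison on $[1,a]$ is an equality and the one on $[a,2]$ is $3u^3-9u^2+3u+5\le0$; the case $P\in E_5\backslash E_4$, where $g=11-u^3$ so the comparison on $[a,2]$ is an equality and the one on $[1,a]$ is $3u^3-9u^2+3u+5\ge0$; and the case $P=E_4\cap E_5$, where the Lemma already gives $f(u)$ on $[a,2]$ and on $[1,a]$ the comparison $3u^3-12u^2+6u+13\le16+3u-9u^2+2u^3$ is just $(u-1)^3\le2$. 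For $P\in L_{14}\backslash(E_1\cup E_4\cup E_5)$ one has $g=3u^3-12u^2+6u+13$, so the comparison on $[1,a]$ is again $(u-1)^3\le2$ and on $[a,2]$ it reads $(u-1)(2u^2-4u-1)\le0$, which holds on $[1,1+\tfrac{\sqrt6}{2}]\supset[a,2]$. For $P=L_{14}\cap E_1$ I would follow the Lemma's subdivision $[1,b]\cup[b,\tfrac32]\cup[\tfrac32,2]$, using $b<a$: on $[1,b]\subset[1,a]$ the bound has $g=3u^3-12u^2+6u+13$ and is handled as before; on $[b,\tfrac32]$, where $g=\tfrac{19-2u^3}{2}$, the comparison against the $[1,a]$ branch of $f$ reduces to $6u^3-18u^2+6u+13\ge0$ (the left side is decreasing on $[1,\tfrac32]$ with value $\tfrac74$ at $u=\tfrac32$), and against the $[a,2]$ branch it reduces to the trivial $22\ge19$; on $[\tfrac32,2]\subset[a,2]$, where $g=3u^3-18u^2+27u-4$, the comparison reduces to $4u^3-18u^2+27u-15\le0$, whose derivative $3(2u-3)^2$ is nonnegative, so it is nondecreasing with value $-1$ at $u=2$.

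The one genuine obstacle is organizational: one must check that the listed cases really exhaust the points of $L_{14}\cup L_{24}\cup L_{34}\cup E_4\cup E_5$ — this uses the local geometry of $T$, for instance that $L_{14}$ meets only $E_1$, $E_4$ and $L_{23}$ among the negative curves and that $P_5\notin L_{14}$, so $E_5\cap L_{14}=\emptyset$ — and that the constants $a\in[1.355,1.356]$ and $b\in[1.261,1.262]$ are located consistently, in particular $b<a$. Granting this, each of the comparisons above is a polynomial inequality of degree at most three, proved by the factorizations and the monotonicity remarks already indicated, and the Corollary follows.
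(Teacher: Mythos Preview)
Your proposal is correct and follows exactly the approach the paper intends: the Corollary is stated without proof, as a direct consequence of the preceding Lemma, and you have supplied precisely the polynomial comparisons needed to see that each case-by-case bound in the Lemma dominates $f(u)$. Your case exhaustion (using the $P_1,P_2,P_3$ symmetry to reduce $L_{14}\cup L_{24}\cup L_{34}$ to $L_{14}$, and noting $E_5\cap L_{14}=\emptyset$), the location $b<a<\tfrac32$, and each of the cubic inequalities are all verified correctly; note also that the statement's ``$L_{12}$'' is evidently a typo for ``$L_{14}$'', which you have silently corrected.
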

\begin{corollary}\label{corA1sing}
Suppose $O$ is a point on a del Pezzo surface $\overline{T}$ with $\mathbb{A}_1$ singularity and $\delta_O(T)\le \frac{6}{5}$ then
$$\delta_{O}(\overline{T},\overline{D})\ge
\left\{
\aligned
&\frac{15 - 3 u^2}{16 + 3 u - 9 u^2 + 2 u^3}\text{ for }u\in [1,a],\\
&\frac{15 - 3 u^2}{11 - u^3}\text{ for }u\in [a,2]
\endaligned
\right.$$
\end{corollary}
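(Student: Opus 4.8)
The plan is to deduce Corollary~\ref{corA1sing} from Corollary~\ref{corA1} by pulling everything back along the resolution $\sigma\colon T\to\overline{T}$. The key input is the identity
$$
\delta_O\big(\overline{T},\overline{D}\big)=\inf_{P\,:\,\sigma(P)=O}\delta_P\big(T,D\big)
$$
recorded above (valid because $\sigma$ is crepant, so $D=\sigma^{*}\overline{D}$ and $K_T=\sigma^{*}K_{\overline{T}}$); taking $u=1$, so that $\overline{D}=-K_{\overline{T}}$, the same identity reads $\delta_O(\overline{T})=\inf_{P\,:\,\sigma(P)=O}\delta_P(T)$. In view of this, it suffices to show that \emph{every} point $P\in T$ with $\sigma(P)=O$ lies on $L_{14}\cup L_{24}\cup L_{34}\cup E_4\cup E_5$: once this containment is known, Corollary~\ref{corA1} bounds each $\delta_P(T,D)$ from below by the asserted piecewise function $f(u)$ on $[1,2]$, and taking the infimum over all such $P$ transfers the bound to $\delta_O(\overline{T},\overline{D})$.

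To prove the containment I would split according to the position of $O$. If $O$ is the singular point of $\overline{T}$, then $\sigma$ is the minimal resolution of an $\mathbb{A}_1$-point, so $\sigma^{-1}(O)$ is exactly the $(-2)$-curve $E_4$, which is contained in the required union (and here the hypothesis $\delta_O(\overline{T})\le 6/5$ costs nothing, since $\delta_P(T)=1$ for every $P\in E_4$). If $O$ is a smooth point, then $\sigma^{-1}(O)$ is a single point $P$, necessarily outside $E_4$ since all of $E_4$ maps to the singular point, and the identity above gives $\delta_P(T)=\delta_O(\overline{T})\le 6/5$. Consulting the list of values of $\delta_P(T)$ recalled from \cite[Section~6.2]{Denisova}, the only stratum of $T\setminus E_4$ on which $\delta_P(T)\le 6/5$ is $(L_{14}\cup L_{24}\cup L_{34}\cup E_5)\setminus E_4$ — every remaining stratum gives $\delta_P(T)\in\{4/3,\,18/13,\,3/2\}$. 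Hence $P$ again lies on $L_{14}\cup L_{24}\cup L_{34}\cup E_4\cup E_5$, and Corollary~\ref{corA1} applies.

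It is worth recalling how Corollary~\ref{corA1} is in turn extracted from the preceding Lemma, since that is where the analytic work sits. For each stratum of $L_{14}\cup L_{24}\cup L_{34}\cup E_4\cup E_5$ the Lemma supplies an explicit rational lower bound for $\delta_P(T,D)$, and one must check that it dominates $f(u)=\min\{(15-3u^2)/(16+3u-9u^2+2u^3),\,(15-3u^2)/(11-u^3)\}$ on $[1,2]$. After cancelling the common positive factor $15-3u^2$ (positive for $u\in[1,2]$) and cross-multiplying, each such comparison becomes a polynomial inequality of degree at most three whose sign on the relevant subinterval is governed by the cubics $3u^3-9u^2+3u+5$ (vanishing at $a$) and $8u^3-24u^2+12u+7$ (vanishing at $b$); for instance on the orbit $E_4\cap E_5$ the comparison is literally the sign of $3u^3-9u^2+3u+5$, which is precisely why the two pieces of $f$ are joined at $a$.

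The step I expect to be the real point of care is not any individual computation — those are routine once the Lemma is granted — but the combinatorics: one must be sure that the stratification of $\overline{T}$ taken from \cite{Denisova} is exhaustive and that the cut-off $\delta_O(\overline{T})\le 6/5$ isolates precisely the locus $L_{14}\cup L_{24}\cup L_{34}\cup E_4\cup E_5$ over which Corollary~\ref{corA1} is available, so that no point with $\delta_O(\overline{T})\le 6/5$ slips outside it; and one must handle the singular point, where the fibre $\sigma^{-1}(O)=E_4$ is a whole curve, on an equal footing by invoking the lower bound of Corollary~\ref{corA1} uniformly along $E_4$.
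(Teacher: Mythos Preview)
Your argument is correct and is exactly the deduction the paper intends: the corollary is stated in the paper without proof, immediately after Corollary~\ref{corA1}, because it follows from Corollary~\ref{corA1} via the crepant identity $\delta_O(\overline{T},\overline{D})=\inf_{P:\sigma(P)=O}\delta_P(T,D)$ together with the stratification of $\delta_P(T)$ from \cite[Section~6.2]{Denisova}, precisely as you spell out. Your case split (singular $O$ with fibre $E_4$; smooth $O$ with $\delta_P(T)\le 6/5$ forcing $P\in(L_{14}\cup L_{24}\cup L_{34}\cup E_5)\setminus E_4$) is the natural one, and your third paragraph on how Corollary~\ref{corA1} itself follows from the Lemma is accurate but goes beyond what is needed here.
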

\subsection{Polarized $\delta$-invariant on Del Pezzo surface of degree $4$ with two $\DA_1$  singularities.}
Suppose that $\overline{T}$ has two singular points and these points are singular point of type $\mathbb{A}_1$. Then $\eta$ is a blow up of $\DP^2$ at points  $P_1$, $P_2$, and $P_4$ in general position and after that blowing up a point $P_3$ which belongs to the exceptional divisor corresponding to $P_2$ and a point $P_5$ which belongs to the exceptional divisor corresponding to $P_4$ and no other negative curve. By  \cite[Section 6.2]{Denisova} we have:
$$\delta_P(T)=
\left\{
\aligned
&1\text{ if }P\in (E_2\cup E_4\cup L_{24}),\\
& 6/5\text{ if }P\in (E_3\cup E_5\cup L_{12}\cup L_{14})\backslash (E_2\cup E_4),\\
 &4/3\text{ if }P\in (C_2\cap E_1)\cup (L_{23}\cap L_{45}),\\
 &18/13\text{ if }P\in (C_2\cup E_1\cup L_{23}\cup L_{45})\backslash\big((C_2\cap E_1)\cup (L_{23}\cap L_{45})\cup  (E_3\cup E_5\cup L_{12}\cup L_{14})\big),\\
 &3/2,\text{ otherwise }
\endaligned
\right.$$
where $E_1$, $E_2$, $E_3$, $E_4$, $E_5$ are exceptional divisors corresponding to $P_1$, $P_2$, $P_3$, $P_4$, $P_5$ respectively, $C_{2}$ is a strict transform of a $(-1)$-curve coming from the conic on $\DP^2$, $L_{ij}$ are strict transforms of the lines passing through $P_i$ and $P_j$ for $(i,j)\in\{(1,2),(1,4)\}$ and $L_{45}$ and $L_{23}$ are strict transforms of a $(-1)$-curve coming from  lines passing through $P_2$ and $P_4$ respectively on $\DP^2$. The dual graph of $(-1)$ and $(-2)$-curves is given in the following picture:
       \begin{center}
      \includegraphics[width=13cm]{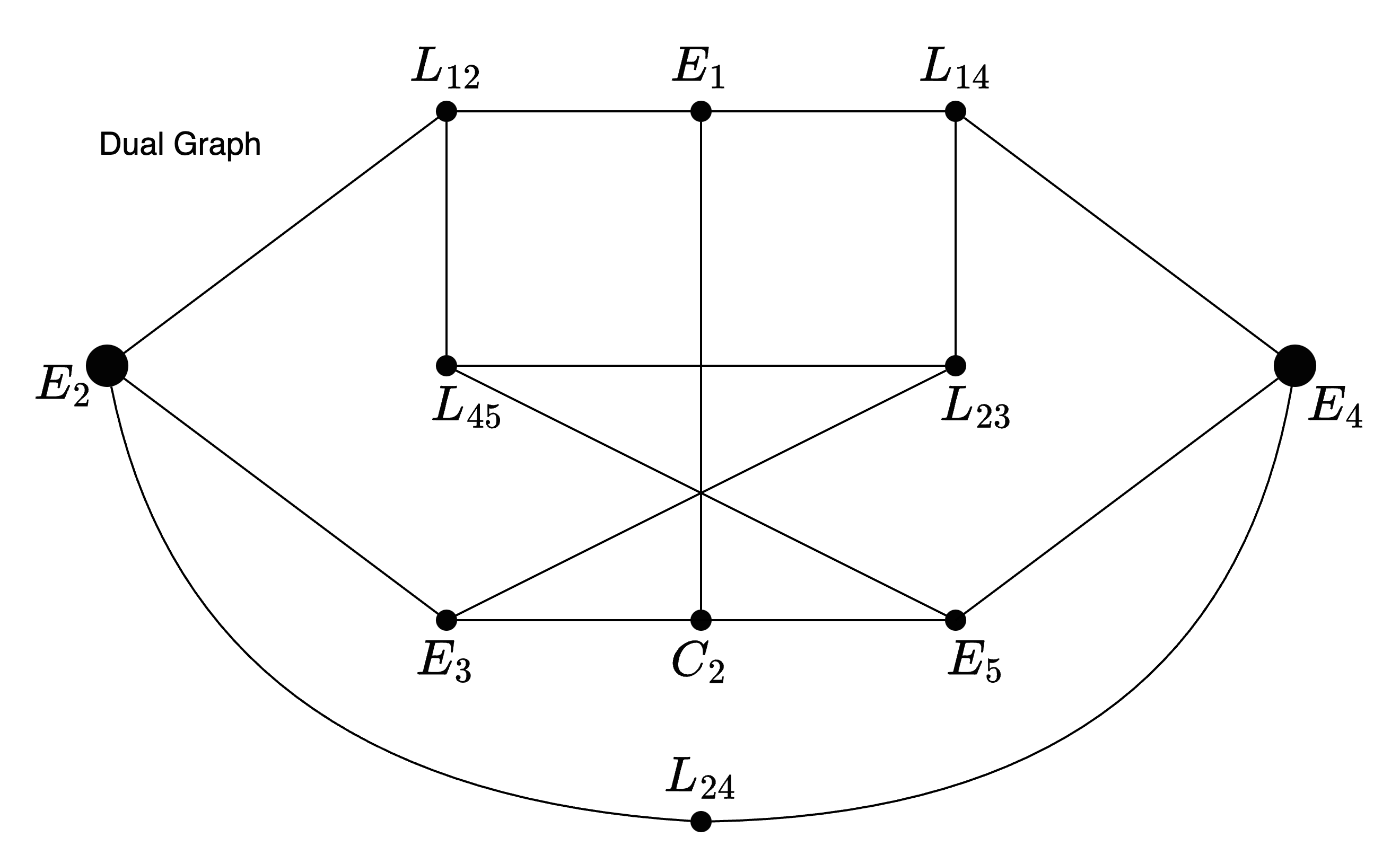}
   \end{center}
\begin{lemma}Suppose $P$ is a point on $T$ and $D=-K_{T}-(u-1)C_2$ with $D^2=5-u^2$ then
$$\delta_{P}(T,D)= 
\left\{
\aligned
&\frac{15 - 3 u^2}{16 + 3 u - 9 u^2 + 2 u^3}\text{ for }P\in (E_2\cup E_4)\backslash (E_3\cup E_5) \text{ and }u\in[1,2],\\
&\frac{15 - 3 u^2}{11 - u^3}\text{ for }P\in (E_3\cup E_5)\backslash (E_2\cup E_4)\text{ and }u\in[1,2],\\
&\frac{15 - 3 u^2}{u^3 - 6u^2 + 6u + 5}\text{ for }P\in L_{24}\backslash (E_2\cup E_4)\text{ and }u\in[1,2],\\
&\frac{15 - 3 u^2}{3 u^3 - 12 u^2 + 6 u + 13}\text{ for }P\in L_{14}\backslash (E_4\cup E_5 \cup E_1)
\text{ and }u\in[1,2],
\endaligned
\right.$$
and 
$$\delta_{P}(T,D)\ge
\left\{
\aligned
&\frac{15 - 3 u^2}{16 + 3 u - 9 u^2 + 2 u^3}\text{ for }P\in \{E_2\cap E_3 ,E_4 \cap E_5\} \text{ and }u\in [1,a],\\
&\frac{15 - 3 u^2}{11 - u^3}\text{ for }P\in \{E_2\cap E_3 ,E_4 \cap E_5\} \text{ and }u\in [a,2]
\endaligned
\right.$$
and 
$$\delta_{P}(T,D)\ge
\left\{
\aligned
&\frac{15 - 3 u^2}{3 u^3 - 12 u^2 + 6 u + 13}\text{ for }P=L_{14}\cap E_1 \text{ and }u\in [1,b],\\
&\frac{2(15 - 3 u^2)}{19 - 2u^3}\text{ for }P=L_{14}\cap E_1 \text{ and }u\in [b,3/2],\\
&\frac{15 - 3 u^2}{3u^3 - 18u^2 + 27u - 4}\text{ for }P=L_{14}\cap E_1 \text{ and }u\in [3/2,2],
\endaligned
\right.$$
where $a$ is a root of $3u^3 - 9u^2 + 3u + 5$ on $[1,2]$,  $b$ is a root of $8u^3 - 24u^2 + 12u + 7$ on $[1,3/2]$. Note that  $a\in [1.355,1.356]$, $b\in [1.261,1.262]$.
\end{lemma}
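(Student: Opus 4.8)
The plan is to run the same argument as in the proof of the preceding lemma. Given a point $P\in T$ we choose a smooth rational curve $\MC\subset T$ through $P$, compute the Zariski decomposition $D-v\MC=P(v)+N(v)$ on $[0,\tau(\MC)]$, and invoke the inequality recalled at the start of this appendix,
$$\delta_P(T,D)\geqslant\min\Big\{\tfrac{1}{S_D(\MC)},\tfrac{1}{S(W^{\MC}_{\bullet,\bullet};P)}\Big\}.$$
Since $\MC$ is a prime divisor on the smooth surface $T$ with $A_T(\MC)=1$ and $P\in C_T(\MC)$, the matching bound $\delta_P(T,D)\leqslant 1/S_D(\MC)$ always holds. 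Hence each equality in the statement reduces to checking $S(W^{\MC}_{\bullet,\bullet};P)\leqslant S_D(\MC)$ for a general point $P$ of the relevant curve $\MC$, and each lower bound at a special intersection point reduces to an explicit piecewise estimate of $S(W^{\MC}_{\bullet,\bullet};P)$.

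For four of the five regimes the computation coincides with the one from the one-$\mathbb{A}_1$ case. Writing $D=(5-2u)H-(2-u)(e_1+\cdots+e_5)$ in the natural basis, one first records that $C_2$ and $L_{14}$ meet neither of the $(-2)$-curves $E_2$, $E_4$, that $L_{14}$ meets none of $E_2$, $E_3$, $C_2$, and that the configuration has the symmetry exchanging $(E_2,E_3,L_{12},L_{23})$ with $(E_4,E_5,L_{14},L_{45})$ and fixing $E_1$, $C_2$ and hence $D$. Then: with $\MC=E_4$ (or, by this symmetry, $\MC=E_2$) the Zariski decomposition of $D-vE_4$ and the case analysis of $h_D(v)$ are exactly those of Step~1 of the preceding lemma, giving $\tau(E_4)=3-u$, $S_D(E_4)=\frac{16+3u-9u^2+2u^3}{15-3u^2}$, the three subcases $P=E_4\cap E_5$, $P\in E_4\cap(L_{14}\cup L_{24})$, $P$ general, and the piecewise bound at $E_4\cap E_5$ (hence at $E_2\cap E_3$) with breakpoint $a$; with $\MC=E_5$ (or $\MC=E_3$) they are those of Step~2, giving $S_D(E_5)=\frac{11-u^3}{15-3u^2}$ and $S(W^{E_5}_{\bullet,\bullet};P)\leqslant S_D(E_5)$ for every $P\in E_5$; and with $\MC=L_{14}$, splitting $u\in[1,3/2]$ and $u\in[3/2,2]$, they are those of Steps~3.1--3.2, giving $S_D(L_{14})=\frac{3u^3-12u^2+6u+13}{15-3u^2}$ and the piecewise bound at $L_{14}\cap E_1$ with breakpoints $b$ and $3/2$. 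Altogether this establishes the first, second and fourth equalities and all three families of lower bounds.

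The only genuinely new computation is for $P\in L_{24}\setminus(E_2\cup E_4)$, where we take $\MC=L_{24}$, a $(-1)$-curve meeting \emph{both} $(-2)$-curves $E_2$ and $E_4$. The Zariski decomposition of $D-vL_{24}$ has exactly two chambers: for $v\in[0,4-2u]$ one has $N(v)=\tfrac{v}{2}(E_2+E_4)$ and $P(v)^2=5-u^2-2v$, while for $v\in[4-2u,3-u]$ one has $N(v)=(u+v-2)(E_2+E_4)+(v-4+2u)(E_3+E_5)$ and $P(v)=(5-2u-v)H-(2-u)e_1$; in particular $\tau(L_{24})=3-u$. Integrating $P(v)^2$ over these two chambers gives $S_D(L_{24})$, and writing out $h_D(v)$ in the subcases according to which further curve (if any) $P$ lies on, together with the check $S(W^{L_{24}}_{\bullet,\bullet};P)\leqslant S_D(L_{24})$, yields the stated value of $\delta_P(T,D)$ on $L_{24}$; points of $L_{24}$ lying on $E_2\cup E_4$ are already covered above.

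The main obstacle is this last case: because $L_{24}$ passes through both singular points of $\overline{T}$, correctly tracking which negative curves enter $N(v)$ --- and the local intersection numbers $(N(v)\cdot L_{24})_P$ entering $h_D(v)$ --- is the delicate step, and unlike the other four regimes it has no analogue in the one-$\mathbb{A}_1$ lemma. Everything else is a relabelling of that lemma together with elementary comparisons of explicit polynomials in $u$ on subintervals of $[1,2]$.
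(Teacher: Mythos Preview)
Your approach is the paper's approach: pick $\MC\in\{E_4,E_5,L_{24},L_{14}\}$ (and use the evident involution swapping the two $\mathbb{A}_1$ configurations to cover $E_2,E_3,L_{12}$), compute the Zariski decomposition of $D-v\MC$, and feed the resulting $S_D(\MC)$ and $S(W^{\MC}_{\bullet,\bullet};P)$ into the Abban--Zhuang inequality. The $E_5$ and $L_{14}$ steps are indeed literally identical to Steps~2 and~3.1--3.2 of the preceding lemma, and your description of the new $L_{24}$ chamber structure is correct.

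There is one inaccuracy worth flagging. For $\MC=E_4$ the Zariski decomposition is \emph{not} exactly that of Step~1 of the one-$\mathbb{A}_1$ lemma: on $v\in[1,3-u]$ the negative part is now
\[
N(v)=(u+v-2)E_5+(v-1)\big(L_{14}+2L_{24}+E_2\big),
\]
not $(u+v-2)E_5+(v-1)(L_{14}+L_{24}+L_{34})$. The point is that $L_{24}$ here meets the second $(-2)$-curve $E_2$, so contracting it forces $E_2$ into $N(v)$ and doubles the coefficient of $L_{24}$. The quantities $P(v)^2$ and $P(v)\cdot E_4$ happen to be unchanged, so $S_D(E_4)$ is the same; but $h_D(v)$ at $P=E_4\cap L_{24}$ is \emph{not} the same as at $E_4\cap L_{14}$ (the local contribution of $N(v)$ is $2(v-1)$ rather than $v-1$), and the resulting $S(W^{E_4}_{\bullet,\bullet};P)$ is different from the one-$\mathbb{A}_1$ value. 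It is still bounded above by $S_D(E_4)$, so the conclusion survives, but your sentence ``the case analysis of $h_D(v)$ are exactly those of Step~1'' is not literally true and would trip you up if you tried to quote the earlier numbers verbatim. With that caveat, the plan is correct and coincides with the paper's proof.
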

\begin{proof}
{\bf Step 1.} Suppose $P\in E_2\cup E_4$.  Without loss of generality we can assume that $P\in E_4$. In this case we set $\MC= E_4$. Then $\tau(\MC)=3-u$. The Zariski Decomposition of the divisor $D-vE_4$ is given by:
$$P(v)=
\begin{cases}-K_{T}-(u-1)C_2-vE_4\text{ for }v\in[0,2-u]\\
-K_{T}-(u-1)C_2-vE_4-(u+v-2)E_5\text{ for }v\in[2-u,1]\\
-K_{T}-(u-1)C_2-vE_4-(u+v-2)E_5-(v-1)(L_{14}+2L_{24}+E_2)\text{ for }v\in[1,3-u]
\end{cases}$$
and 
$$N(v)=
\begin{cases}
0\text{ for }v\in[0,2-u]\\
(u+v-2)E_5\text{ for }v\in[2-u,1]\\
(u+v-2)E_5+(v-1)(L_{14}+2L_{24}+E_2)\text{ for }v\in[1,3-u]
\end{cases}$$
Moreover, 
$$P(v)^2=\begin{cases}5 -u^2 - 2v^2 \text{ for }v\in[0,2-u]\\
9+ 2uv - 4u - 4v - v^2\text{ for }v\in[2-u,1]\\
2(2 -v)(3 - u - v)\text{ for }v\in[1,3-u]
\end{cases}$$\text{ and }$$P(v)\cdot \MC=\begin{cases}
2v\text{ for }v\in[0,2-u]\\
2-u+v\text{ for }v\in[2-u,1]\\
5-u-2v\text{ for }v\in[1,3-u]
\end{cases}$$
Thus,
\begin{align*}
    S_D(\MC)=\frac{1}{5-u^2}\Big(\int_0^{2-u} 5 -u^2 - 2v^2 dv&+\int_{2-u}^1 9+ 2uv - 4u - 4v - v^2 dv+\\
    &+\int_{1}^{3-u} 2(2 -v)(3 - u - v) dv\Big)=\frac{16 + 3 u - 9 u^2 + 2 u^3}{15 - 3 u^2}
\end{align*}
Thus, $\delta_P(T,D)\le \frac{15 - 3 u^2}{16 + 3 u - 9 u^2 + 2 u^3}$ for $P\in E_4$. Note that we have:
\begin{itemize}
    \item if $P\in E_4\backslash (E_5\cup L_{14}\cup L_{24}\cup L_{34})$
$$h_D(v)=\begin{cases}2v^2\text{ for }v\in[0,2-u]\\
\frac{(2-u+v)^2}{2}\text{ for }v\in[2-u,1]\\
\frac{(5-u-2v)^2}{2}\text{ for }v\in[1,3-u]
\end{cases}$$
\item if $P=E_4\cap E_5$
$$h_D(v)=\begin{cases}2v^2\text{ for }v\in[0,2-u]\\
\frac{(2 - u + v) (u + 3 v - 2)}{2}\text{ for }v\in[2-u,1]\\
\frac{(u + 1) (5-u - 2 v)}{2}\text{ for }v\in[1,3-u]
\end{cases}$$
    \item if $P\in E_4\cap (L_{14}\cup L_{24})$
$$h_D(v)\le \begin{cases}2v^2\text{ for }v\in[0,2-u]\\
\frac{(2-u+v)^2}{2}\text{ for }v\in[2-u,1]\\
\frac{(5-u - 2 v) (1 -u + 2 v)}{2}\text{ for }v\in[1,3-u]
\end{cases}$$
\end{itemize}
So we have 
\begin{itemize}
    \item if $P\in E_4\backslash (E_5\cup L_{14}\cup L_{24}\cup L_{34})$ then
    \begin{align*}
    S_D(W_{\bullet,\bullet}^{\MC};P)=\frac{2}{5-u^2}\Big(\int_0^{2-u} 2v^2 dv&+\int_{2-u}^1 \frac{(2-u+v)^2}{2} dv+\int_{1}^{3-u} \frac{(5-u-2v)^2}{2} dv\Big)=\\
    &=\frac{9 + 6 u - 9 u^2 + 2 u^3}{15 - 3 u^2}\le \frac{16 + 3 u - 9 u^2 + 2 u^3}{15 - 3 u^2}
\end{align*}
   \item if $P=E_4\cap E_5$ then
    \begin{align*}
    S_D(W_{\bullet,\bullet}^{\MC};P)=\frac{2}{5-u^2}\Big(\int_0^{2-u} 2v^2 dv&+\int_{2-u}^1\frac{(2 - u + v) (u + 3 v - 2)}{2} dv+\\
    &+\int_{1}^{3-u} \frac{(u + 1) (5-u - 2 v)}{2} dv\Big)=\frac{11 - u^3}{15-3u^2}
\end{align*}
 \item if $P\in E_4\cap (L_{14}\cup L_{24})$ then
    \begin{align*}
    S_D(W_{\bullet,\bullet}^{\MC};P)=\frac{2}{5-u^2}\Big(\int_0^{2-u} 2v^2 dv&+\int_{2-u}^1 \frac{(2-u+v)^2}{2} dv+\int_{1}^{3-u} \frac{(5-u - 2 v) (1 -u + 2 v)}{2}dv\Big)=\\
    &=\frac{2u^3 - 6u^2 + 8}{15 - 3u^2}\le \frac{16 + 3 u - 9 u^2 + 2 u^3}{15 - 3 u^2}
\end{align*}
\end{itemize}
We obtain that  
$$\delta_{P}(T,D)= \frac{15 - 3 u^2}{16 + 3 u - 9 u^2 + 2 u^3}\text{ for }P\in E_4\backslash E_5 \text{ and }u\in[1,2]$$
and 
$$\delta_{P}(T,D)\ge 
\left\{
\aligned
&\frac{15 - 3 u^2}{16 + 3 u - 9 u^2 + 2 u^3}\text{ for }P=E_4\cap E_5 \text{ and }u\in [1,a]\\
&\frac{15 - 3 u^2}{11 - u^3}\text{ for }P= E_4\cap E_5 \text{ and }u\in [a,2]
\endaligned
\right.$$
where $a$ is a root of $3u^3 - 9u^2 + 3u + 5$ on $[1,2]$. Note that $a\in [1.355,1.356]$.\\
{\bf Step 2.} Suppose $P\in E_3\cup E_5$. In this case we set $\MC= E_5$. Then $\tau(\MC)=2$. Without loss of generality we can assume that $P\in E_5$. The Zariski Decomposition of the divisor $D-vE_5$ is given by:
$$P(v)=
\begin{cases}-K_{T}-(u-1)C_2-vE_5-\frac{v}{2}E_4\text{ for }v\in[0,1]\\
-K_{T}-(u-1)C_2-vE_5-\frac{v}{2}E_4-(v-1)L_{45}\text{ for }v\in[1,u]\\
-K_{T}-(u-1)C_2-vE_5-\frac{v}{2}E_4-(v-1)L_{45}-(v-u)C_2\text{ for }v\in[u,2]
\end{cases}$$
and 
$$N(v)=
\begin{cases}\frac{v}{2}E_4\text{ for }v\in[0,1]\\
\frac{v}{2}E_4+(v-1)L_{45}\text{ for }v\in[1,u]\\
\frac{v}{2}E_4+(v-1)L_{45}+(v-u)C_2\text{ for }v\in[u,2]
\end{cases}$$
Moreover, 
$$P(v)^2=
\begin{cases} 5 - 4v + 2uv - u^2 - v^2/2 \text{ for }v\in[0,1]\\
6 - 6v + v^2/2 + 2uv - u^2\text{ for }v\in[1,u]\\
\frac{3(2 - v)^2}{2}\text{ for }v\in[u,2]
\end{cases}$$
and
$$P(v)\cdot \MC=\begin{cases}
2-u+v/2\text{ for }v\in[0,1]\\
3-u-v/2\text{ for }v\in[1,u]\\
3-3v/2\text{ for }v\in[u,2]
\end{cases}$$
Thus,
\begin{align*}
    S_D(\MC)=\frac{1}{5-u^2}\Big(\int_0^{1} 5 - 4v + 2uv - u^2 - v^2/2 dv&+\int_{1}^{u} 6 - 6v + v^2/2 + 2uv - u^2 dv+\\
    &+\int_{u}^{2} \frac{3(2 - v)^2}{3} dv\Big)=\frac{11 - u^3}{15 - 3 u^2}
\end{align*}
Thus, $\delta_P(T,D)\le \frac{15 - 3 u^2}{11 - u^3}$ for $P\in E_5$. Note that we have:
\begin{itemize}
\item if $P\in E_5\backslash (E_4\cup C_2\cup L_{45})$ then
$$h_D(v)=\begin{cases}
\frac{(2-u+v/2)^2}{2}\text{ for }v\in[0,1]\\
\frac{(3-u-v/2)^2}{2}\text{ for }v\in[1,u]\\
\frac{(3-3v/2)^2}{2}\text{ for }v\in[u,2]
\end{cases}$$
\item if $P=E_5\cap C_2$ then
$$h_D(v)=\begin{cases}
\frac{(2-u+v/2)^2}{2}\text{ for }v\in[0,1]\\
\frac{(3-u-v/2)^2}{2}\text{ for }v\in[1,u]\\
\frac{ 3 (2 - v) (6 -4 u + v)}{8}\text{ for }v\in[u,2]
\end{cases}$$
\item if $P= E_5\cap L_{45}$ then
$$h_D(v)=\begin{cases}
\frac{(2-u+v/2)^2}{2}\text{ for }v\in[0,1]\\
\frac{(6-2 u - v) (2-2 u + 3 v)}{8}\text{ for }v\in[1,u]\\
\frac{3(2 - v) (v + 2)}{8}\text{ for }v\in[u,2]
\end{cases}$$
\end{itemize}
So we have 
\begin{itemize}
    \item if $P\in  E_5\backslash (E_4\cup C_2\cup L_{45})$ then
    \begin{align*}
    S_D(W_{\bullet,\bullet}^{\MC};P)=\frac{2}{5-u^2}&\Big(\int_0^{1} \frac{(2-u+v/2)^2}{2} dv+\int_{1}^u \frac{(3-u-v/2)^2}{2} dv+\\
    &+\int_{u}^{2} \frac{(3-3v/2)^2}{2} dv\Big)=\frac{21 + 6 u - 18 u^2 + 5 u^3}{2(15 - 3 u^2)}\le \frac{11 - u^3}{15 - 3 u^2}
\end{align*}
   \item if $P=  E_5\cap C_2$ then
    \begin{align*}
    S_D(W_{\bullet,\bullet}^{\MC};P)=\frac{2}{5-u^2}&\Big(\int_0^{1} \frac{(2-u+v/2)^2}{2} dv+\int_{1}^u \frac{(3-u-v/2)^2}{2} dv+\\
    &+\int_{u}^{2} \frac{ 3 (2 - v) (6 -4 u + v)}{8} dv\Big)=\frac{45 - 30 u + 2 u^3}{2(15 - 3 u^2)}\le \frac{11 - u^3}{15 - 3 u^2}
\end{align*}
  \item if $P= E_5\cap L_{45}$ then
    \begin{align*}
    S_D(W_{\bullet,\bullet}^{\MC};P)=\frac{2}{5-u^2}&\Big(\int_0^{1} \frac{(2-u+v/2)^2}{2} dv+\int_{1}^u \frac{(6-2 u - v) (2-2 u + 3 v)}{8}dv+\\
    &+\frac{3(2 - v) (v + 2)}{8} dv\Big)=\frac{26 - 12 u^2 + 3 u^3}{2(15 - 3 u^2)}\le \frac{11 - u^3}{15 - 3 u^2}
\end{align*}
\end{itemize}
We obtain that $$\delta_P(T,D)= \frac{15 - 3 u^2}{11 - u^3}\text{ for }P\in (E_3\cup E_5)\backslash (E_2\cup E_4)\text{ and }u\in[1,2].$$\\
{\bf Step 3.} Suppose  $P\in L_{24}$. In this case we set $\MC= L_{24}$. Then $\tau(\MC)=3-u$. The Zariski Decomposition of the divisor $D-vL_{24}$ is given by:
$$P(v)=
\begin{cases}D-vL_{24}-\frac{v}{2}(E_2+E_4)\text{ for }v\in[0,4-2u]\\
D-vL_{24}-(u+v-2)(E_2+E_4)-(2u+v-4)(E_3+E_5)\text{ for }v\in[4-2u,3-u]
\end{cases}$$
and 
$$N(v)=
\begin{cases}\frac{v}{2}(E_2+E_4)\text{ for }v\in[0,4-2u]\\
(u+v-2)(E_2+E_4)+(2u+v-4)(E_3+E_5)\text{ for }v\in[4-2u, 3-u]
\end{cases}$$
Moreover,
$$P(v)^2=
\begin{cases} -u^2 - 2v + 5 \text{ for }v\in[0,4-2u]\\
(u + v - 3)(3u + v - 7)\text{ for }v\in[4-2u,3-v]
\end{cases}$$\text{ and }$$P(v)\cdot\MC=\begin{cases}
1\text{ for }v\in[0,4-2u]\\
5 - 2u - v\text{ for }v\in[4-2u,3-u]
\end{cases}$$
Thus,
\begin{align*}
    S_D(\MC)=\frac{1}{5-u^2}\Big(\int_0^{4-2u} -u^2 - 2v + 5 dv&+\int_{4-2u}^{3-uu} (u + v - 3)(3u + v - 7) dv\Big)=\\
    &=\frac{4u^3 - 15u^2 + 6u + 17}{15 - 3 u^2}
\end{align*}
Thus, $\delta_P(T,D)\le \frac{15 - 3 u^2}{4u^3 - 15u^2 + 6u + 17}$ for $P\in L_{24}$.
If $P\in L_{24}\backslash (E_2\cup E_4)$ then
$$h_D(v)=\begin{cases}
\frac{1}{2}\text{ for }v\in[0,4-2u]\\
\frac{(5 - 2u - v)^2}{2}\text{ for }v\in[4-2u,3-u]
\end{cases}$$
So for $P\in L_{24}\backslash (E_2\cup E_4)$ we have 
    \begin{align*}
    S_D(W_{\bullet,\bullet}^{\MC};P)=\frac{2}{5-u^2}&\Big(\int_0^{4-2u} \frac{1}{2} dv+\int_{4-2u}^{3-u} \frac{(5 - 2u - v)^2}{2} dv=\\
    &=\frac{u^3 - 6u^2 + 6u + 5}{15 - 3 u^2}\le \frac{4u^3 - 15u^2 + 6u + 17}{15 - 3 u^2}
\end{align*}
We obtain that $$\delta_P(T,D)= \frac{15 - 3 u^2}{u^3 - 6u^2 + 6u + 5}\text{ for }P\in L_{24}\backslash (E_2\cup E_4)\text{ and }u\in[1,2].$$\\
{\bf Step 4.1.} Suppose  $P\in L_{12}\cup L_{14}$ and $u\in [1,3/2]$. In this case we set $\MC= L_{14}$. Then $\tau(\MC)=3-u$. Without loss of generality, we can assume that $P\in L_{14}$. The Zariski Decomposition of the divisor $D-vL_{14}$ is given by:
$$P(v)=
\begin{cases}D-vL_{14}-\frac{v}{2}E_4\text{ for }v\in[0,2-u]\\
D-vL_{14}-\frac{v}{2}E_4-(u+v-2)E_1\text{ for }v\in[2-u,1]\\
D-vL_{14}-\frac{v}{2}E_4-(u+v-2)E_1-(v-1)L_{23}\text{ for }v\in[1,4-2u]\\
D-vL_{14}-(u+v-2)(E_1+E_4)-(v-1)L_{23}-(2u+v-4)E_5\text{ for }v\in[4-2u,3-u]
\end{cases}$$
and 
$$N(v)=
\begin{cases}\frac{v}{2}E_4\text{ for }v\in[0,2-u]\\
\frac{v}{2}E_4+(u+v-2)E_1\text{ for }v\in[2-u,1]\\
\frac{v}{2}E_4+(u+v-2)E_1+(v-1)L_{23}\text{ for }v\in[1,4-2u]\\
(u+v-2)(E_1+E_4)+(v-1)L_{23}+(2u+v-4)E_5\text{ for }v\in[4-2u,3-u]
\end{cases}$$
Moreover
$$P(v)^2=
\begin{cases}5 - 2v - v^2/2 - u^2\text{ for }v\in[0,2-u]\\
9 - 4u - 6v + v^2/2 + 2uv\text{ for }v\in[2-u,1]\\
\frac{(v - 2)(3v + 4u - 10)}{2}\text{ for }v\in[1,4-2u]\\
2( u + v-3)^2\text{ for }v\in[4-2u,3-u]
\end{cases}$$
and
$$P(v)\cdot \MC=
\begin{cases}
v/2 + 1\text{ for }v\in[0,2-u]\\
3 - u - v/2\text{ for }v\in[2-u,1]\\
4 - u - 3v/2\text{ for }v\in[1,4-2u]\\
2(3 - u - v)\text{ for }v\in[4-2u,3-u]
\end{cases}$$
Thus,
\begin{align*}
    S_D(\MC)=\frac{1}{5-u^2}&\Big(\int_0^{2-u} 5 - 2v - v^2/2 - u^2 dv 
    +\int_{2-u}^{1} 9 - 4u - 6v + v^2/2 + 2uv dv+\\
    &+\int_{1}^{4-2u} \frac{(v - 2)(3v + 4u - 10)}{2} dv+\int_{4-2u}^{3-u} 2( u + v-3)^2 dv\Big)=\frac{3 u^3 - 12 u^2 + 6 u + 13}{15 - 3 u^2}
\end{align*}
Thus, $\delta_P(T,D)\le \frac{15 - 3 u^2}{3 u^3 - 12 u^2 + 6 u + 13}$ for $P\in L_{14}$. Note that we have:
\begin{itemize}
\item if $P\in L_{14}\backslash (E_4\cup E_1\cup L_{23}\cup E_5)$ then
$$h_D(v)=\begin{cases}
\frac{(v/2 + 1)^2}{2}\text{ for }v\in[0,2-u]\\
\frac{(3 - u - v/2)^2}{2}\text{ for }v\in[2-u,1]\\
\frac{(4 - u - 3v/2)^2}{2}\text{ for }v\in[1,4-2u]\\
2(3 - u - v)^2\text{ for }v\in[4-2u,3-u]
\end{cases}$$
\item if $P =  L_{14} \cap E_1$ then
$$h_D(v)=\begin{cases}
\frac{(v/2 + 1)^2}{2}\text{ for }v\in[0,2-u]\\
\frac{(6 - 2 u - v ) (2 u + 3 v - 2)}{8}\text{ for }v\in[2-u,1]\\
\frac{(8 -2 u - 3 v) (2 u + v)}{8}\text{ for }v\in[1,4-2u]\\
(3-u - v)\text{ for }v\in[4-2u,3-u]
\end{cases}$$
\item if $P = L_{14}\cap L_{23}$ then
$$h_D(v)=\begin{cases}
\frac{(v/2 + 1)^2}{2}\text{ for }v\in[0,2-u]\\
\frac{(3 - u - v/2)^2}{2}\text{ for }v\in[2-u,1]\\
\frac{(8 -2 u - 3 v) (4 -2 u + v)}{8}\text{ for }v\in[1,4-2u]\\
2 (2 - u) (3 - u - v)\text{ for }v\in[4-2u,3-u]
\end{cases}$$
\end{itemize}
So we have 
\begin{itemize}
    \item if $P\in  L_{14}\backslash (E_4\cup E_1\cup L_{23}\cup E_5)$ then
    \begin{align*}
    S_D(W_{\bullet,\bullet}^{\MC};P)=\frac{2}{5-u^2}&\Big(\int_0^{2-u} \frac{(v/2 + 1)^2}{2} dv+\int_{2-u}^1 \frac{(3 - u - v/2)^2}{2} dv+\\
    &+\int_{1}^{4-2u} \frac{(4 - u - 3v/2)^2}{2} dv+ \int_{4-2u}^{3-u} 2(3 - u - v)^2 dv\Big)=\\
    &=\frac{21-u^3 - 6u}{2(15 - 3 u^2)}\le \frac{3 u^3 - 12 u^2 + 6 u + 13}{15 - 3 u^2}
    \end{align*}
    \item if $P  = L_{14}\cap E_1$ then
    \begin{align*}
    S_D(W_{\bullet,\bullet}^{\MC};P)=\frac{2}{5-u^2}&\Big(\int_0^{2-u} \frac{(v/2 + 1)^2}{2} dv+\int_{2-u}^1 \frac{(6 - 2 u - v ) (2 u + 3 v - 2)}{8} dv+\\
    &+\int_{1}^{4-2u} \frac{(8 -2 u - 3 v) (2 u + v)}{8} dv+ \int_{4-2u}^{3-u}(3-u - v) dv\Big)=\\
    &=\frac{19 - 2u^3}{2(15 - 3 u^2)}
    \end{align*}
        \item if $P  = L_{14}\cap L_{23}$ then
    \begin{align*}
    S_D(W_{\bullet,\bullet}^{\MC};P)=\frac{2}{5-u^2}&\Big(\int_0^{2-u} \frac{(v/2 + 1)^2}{2} dv+\int_{2-u}^1\frac{(3 - u - v/2)^2}{2} dv+\\
    &+\int_{1}^{4-2u} \frac{(8 -2 u - 3 v) (4 -2 u + v)}{8} dv+ \int_{4-2u}^{3-u} 2 (2 - u) (3 - u - v) dv\Big)=\\
    &=\frac{26 - 12u^2 + 3u^3}{2(15 - 3 u^2)}\le \frac{3 u^3 - 12 u^2 + 6 u + 13}{15 - 3 u^2}
    \end{align*}
\end{itemize}
We obtain that  
$$\delta_{P}(T,D)= \frac{15 - 3 u^2}{3 u^3 - 12 u^2 + 6 u + 13}\text{ for }P\in L_{14}\backslash (E_4\cup E_5 \cup E_1)
\text{ and }u\in[1,3/2]$$
and 
$$\delta_{P}(T,D)\ge 
\left\{
\aligned
&\frac{15 - 3 u^2}{3 u^3 - 12 u^2 + 6 u + 13}\text{ for }P=L_{14}\cap E_1 \text{ and }u\in [1,b]\\
&\frac{2(15 - 3 u^2)}{19 - 2u^3}\text{ for }P=L_{14}\cap E_1 \text{ and }u\in [b,3/2]
\endaligned
\right.$$
where $b$ is a root of $8u^3 - 24u^2 + 12u + 7$ on $[1,3/2]$. Note that $b\in [1.261,1.262]$.\\
{\bf Step 4.2.} Suppose $P\in  L_{12}\cup L_{14}$ and $u\in [3/2,2]$. In this case we set $\MC= L_{14}$. Then $\tau(\MC)=3-u$. Without loss of generality, we can assume that $P\in L_{14}$. The Zariski Decomposition of the divisor $D-vL_{14}$ is given by:
$$P(v)=
\begin{cases}D-vL_{14}-\frac{v}{2}E_4\text{ for }v\in[0,2-u]\\
D-vL_{14}-\frac{v}{2}E_4-(u+v-2)E_1\text{ for }v\in[2-u,4-2u]\\
D-vL_{14}-(u+v-2)(E_1+E_4)-(2u+v-4)E_5\text{ for }v\in[4-2u,1]\\
D-vL_{14}-(u+v-2)(E_1+E_4)-(v-1)L_{23}-(2u+v-4)E_5\text{ for }v\in[1, 3-u]
\end{cases}$$
and 
$$N(v)=
\begin{cases}
\frac{v}{2}E_4\text{ for }v\in[0,2-u]\\
\frac{v}{2}E_4+(u+v-2)E_1\text{ for }v\in[2-u,4-2u]\\
(u+v-2)(E_1+E_4)+(2u+v-4)E_5\text{ for }v\in[4-2u,1]\\
(u+v-2)(E_1+E_4)+(v-1)L_{23}+(2u+v-4)E_5\text{ for }v\in[1, 3-u]
\end{cases}$$
Moreover
$$P(v)^2=
\begin{cases}5 - 2v - v^2/2 - u^2\text{ for }v\in[0,2-u]\\
9 - 4u - 6v + v^2/2 + 2uv\text{ for }v\in[2-u,4-2u]\\
2u^2 + 4uv + v^2 - 12u - 10v + 17\text{ for }v\in[4-2u,1]\\
2(u + v-3)^2\text{ for }v\in[1,3-u]
\end{cases}$$
and
$$P(v)\cdot\MC=
\begin{cases}
1 + v/2\text{ for }v\in[0,2-u]\\
3 - u - v/2\text{ for }v\in[2-u,4-2u]\\
5 - 2u - v\text{ for }v\in[4-2u,1]\\
2(3 - u - v)\text{ for }v\in[1,3-u]
\end{cases}$$
Thus,
\begin{align*}
    S_D(\MC)=\frac{1}{5-u^2}\Big(\int_0^{2-u} 5 - 2v - v^2/2 - u^2 dv 
    &+\int_{2-u}^{4-2u} 9 - 4u - 6v + v^2/2 + 2uv dv+\\
    &+\int_{4-2u}^1 2u^2 + 4uv + v^2 - 12u - 10v + 17 dv+\\
    &+\int_{1}^{3-u} 2( u + v-3)^2 dv\Big)=\frac{3u^3 - 12u^2 + 6u + 13}{15 - 3 u^2}
\end{align*}
Thus, $\delta_P(T,D)\le \frac{15 - 3 u^2}{3u^3 - 12u^2 + 6u + 13}$ for $P\in L_{14}$. Note that we have:
\begin{itemize}
\item if $P\in L_{14}\backslash (E_4\cup E_1\cup L_{23}\cup E_5)$ then
$$h_D(v)=\begin{cases}
\frac{(1+ v/2)^2}{2}\text{ for }v\in[0,2-u]\\
\frac{(3 - u - v/2)^2}{2}\text{ for }v\in[2-u,4-2u]\\
\frac{(5 - 2u - v)^2}{2}\text{ for }v\in[4-2u,1]\\
2(3 - u - v)^2\text{ for }v\in[1,3-u]
\end{cases}$$
\item if $P = L_{14} \cap E_1$ then
$$h_D(v)=\begin{cases}
\frac{(1+ v/2)^2}{2}\text{ for }v\in[0,2-u]\\
\frac{ (6 -2 u - v ) (2 u + 3 v - 2)}{8}\text{ for }v\in[2-u,4-2u]\\
\frac{(v + 1) (5 -2 u - v)}{2}\text{ for }v\in[4-2u,1]\\
2 (3-u - v)\text{ for }v\in[1,3-u]
\end{cases}$$
\item if $P= L_{14}\cap L_{23}$ then
$$h_D(v)=\begin{cases}
\frac{(1+ v/2)^2}{2}\text{ for }v\in[0,2-u]\\
\frac{(3 - u - v/2)^2}{2}\text{ for }v\in[2-u,4-2u]\\
\frac{(5 - 2u - v)^2}{2}\text{ for }v\in[4-2u,1]\\
2 (2 - u) (3 -u - v)\text{ for }v\in[1,3-u]
\end{cases}$$
\end{itemize}
\begin{itemize}
    \item if $P\in  L_{14}\backslash (E_4\cup E_1\cup L_{23}\cup E_5)$ then
    \begin{align*}
    S_D(W_{\bullet,\bullet}^{\MC};P)=\frac{2}{5-u^2}&\Big(\int_0^{2-u} \frac{(v/2 + 1)^2}{2} dv+\int_{2-u}^{4-2u} \frac{(3 - u - v/2)^2}{2} dv+\\
    &+\int_{4-2u}^1 \frac{(5-2u-v)^2}{2} dv+ \int_{1}^{3-u} 2(3 - u - v)^2 dv\Big)=\\
    &=\frac{7u^3 - 36u^2 + 48u - 6}{2(15 - 3 u^2)}\le \frac{3 u^3 - 12 u^2 + 6 u + 13}{15 - 3 u^2}
    \end{align*}
    \item if $P = L_{14}\cap E_1$ then
    \begin{align*}
    S_D(W_{\bullet,\bullet}^{\MC};P)=\frac{2}{5-u^2}&\Big(\int_0^{2-u} \frac{(v/2 + 1)^2}{2} dv+\int_{2-u}^{4-2u} \frac{ (6 -2 u - v ) (2 u + 3 v - 2)}{8} dv+\\
    &+\int_{4-2u}^1 \frac{(v + 1) (5 -2 u - v)}{2} dv+ \int_{1}^{3-u} 2 (3-u - v) dv\Big)=\\
    &=\frac{3u^3 - 18u^2 + 27u - 4}{15 - 3 u^2}
    \end{align*}
    \item if $P = L_{14}\cap L_{23}$ then
    \begin{align*}
    S_D(W_{\bullet,\bullet}^{\MC};P)=\frac{2}{5-u^2}&\Big(\int_0^{2-u} \frac{(v/2 + 1)^2}{2} dv+\int_{2-u}^{4-2u} \frac{(3 - u - v/2)^2}{2} dv+\\
    &+\int_{4-2u}^1 \frac{(5-2u-v)^2}{2} dv+ \int_{1}^{3-u} 2 (2 - u) (3 -u - v) dv\Big)=\\
    &=\frac{3u^3 - 12u^2 + 26}{2(15 - 3 u^2)}\le \frac{3 u^3 - 12 u^2 + 6 u + 13}{15 - 3 u^2}
    \end{align*}
\end{itemize}
We obtain that 
$$\delta_{P}(T,D)= \frac{15 - 3 u^2}{3 u^3 - 12 u^2 + 6 u + 13}\text{ for }P\in L_{14}\backslash (E_1\cup E_4 \cup E_5) \text{ and }u\in[3/2,2]$$
and 
$$\delta_{P}(T,D)\ge \frac{15 - 3 u^2}{3u^3 - 18u^2 + 27u - 4}\text{ for }P=L_{14}\cap E_1 \text{ and }u\in [3/2,2]$$
\end{proof}
\begin{corollary}\label{cor2A1}
Let $P$ be a point in $T$ that is contained in $ L_{12}\cup L_{14}\cup L_{24}\cup E_2\cup E_3\cup E_4\cup E_4$ then
$$\delta_{P}(T,D)\ge
\left\{
\aligned
&\frac{15 - 3 u^2}{16 + 3 u - 9 u^2 + 2 u^3}\text{ for }u\in [1,a],\\
&\frac{15 - 3 u^2}{11 - u^3}\text{ for }u\in [a,2]
\endaligned
\right.$$
\end{corollary}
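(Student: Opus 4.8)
The statement follows at once from the preceding Lemma, together with a short list of elementary comparisons between rational functions of $u$ on $[1,2]$. The first step is to split the locus $L_{12}\cup L_{14}\cup L_{24}\cup E_2\cup E_3\cup E_4\cup E_5$ according to the cases treated in that Lemma. Exploiting the symmetry of the configuration that exchanges the two $\mathbb{A}_1$ points (so $E_2\leftrightarrow E_4$, $E_3\leftrightarrow E_5$, $L_{12}\leftrightarrow L_{14}$, while $E_1$, $L_{24}$, $C_2$, and hence $D$, are fixed), every point $P$ of this locus falls into exactly one of the following cases: $P\in(E_2\cup E_4)\backslash(E_3\cup E_5)$; $P\in(E_3\cup E_5)\backslash(E_2\cup E_4)$; $P\in L_{24}\backslash(E_2\cup E_4)$; $P\in L_{14}\backslash(E_1\cup E_4\cup E_5)$ (or the symmetric case on $L_{12}$); $P\in\{E_2\cap E_3,\ E_4\cap E_5\}$; or $P=L_{14}\cap E_1$ (equivalently $L_{12}\cap E_1$). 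For each of these the Lemma supplies either the exact value of $\delta_P(T,D)$ or an explicit piecewise lower bound.

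The second step is to verify, case by case, that the quantity furnished by the Lemma is at least the claimed bound, whose branches are $\frac{15-3u^2}{16+3u-9u^2+2u^3}$ on $[1,a]$ and $\frac{15-3u^2}{11-u^3}$ on $[a,2]$. Every denominator that occurs is positive on the relevant subinterval of $[1,2]$, so each comparison reduces to a polynomial inequality between denominators. On $(E_2\cup E_4)\backslash(E_3\cup E_5)$ the Lemma gives exactly $\frac{15-3u^2}{16+3u-9u^2+2u^3}$, which equals the bound on $[1,a]$ and dominates it on $[a,2]$ precisely because $16+3u-9u^2+2u^3\le 11-u^3$ amounts to $3u^3-9u^2+3u+5\le 0$, i.e. the defining sign condition of $a$; the $(E_3\cup E_5)$-case is the mirror image, with $3u^3-9u^2+3u+5\ge 0$ needed on $[1,a]$. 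For the $L_{24}$-, $L_{14}$- and inner-corner cases the estimates rearrange to inequalities such as $(u-1)^3\le 2$, $(u-1)^3+6(2-u)\ge 0$, or $(u-1)\bigl(2(u-1)^2-3\bigr)\le 0$, all immediate on $[1,2]$, or to a cubic whose derivative is a perfect square: for instance $\frac{15-3u^2}{3u^3-18u^2+27u-4}\ge\frac{15-3u^2}{11-u^3}$ on $[3/2,2]$ follows from $4u^3-18u^2+27u-15\le 0$, which is non-decreasing there since its derivative is $3(2u-3)^2\ge 0$ and which equals $-1$ at $u=2$. Finally, for $P=L_{14}\cap E_1$ the middle branch $\frac{2(15-3u^2)}{19-2u^3}$ of the Lemma's bound beats the claimed bound on $[a,3/2]$ by the trivial $22\ge 19$ and on $[b,a]$ by $6u^3-18u^2+6u+13\ge 0$ (whose minimum on $[1,2]$, attained at $u=1+\sqrt{2/3}$, is positive), while the remaining branches are handled as above.

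I do not expect a genuine obstacle here: the statement is bookkeeping on top of the Lemma. The one place that asks for care is the left endpoint, where several of the polynomial differences vanish (for example $2u^3-6u^2+3u+1=0$, and $3u^3-9u^2+3u+5>0$ only for $u<a$), so a few comparisons are tight at $u=1$ and one must confirm both the sign on the open interval and that the branch of the bound active there is the $[1,a]$-branch. The other thing to pin down is that the list of cases really exhausts the locus---in particular that the ``corner'' points $E_2\cap E_3$, $E_4\cap E_5$ and $L_{14}\cap E_1$, where only a lower bound is available, are treated through the piecewise estimates of the Lemma rather than through the exact formulas valid off these corners. Putting the finitely many checks together yields $\delta_P(T,D)$ at least the stated piecewise bound for every $P$ in the locus.
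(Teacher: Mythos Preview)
Your proposal is correct and follows exactly the route the paper intends: the corollary is stated immediately after the Lemma with no separate proof, so the paper's ``proof'' is simply the observation that each of the Lemma's case-by-case values or lower bounds dominates the claimed piecewise bound. You have made this explicit by exhausting the locus via the evident symmetry, reducing every comparison to a polynomial inequality in $u$ on the appropriate subinterval of $[1,2]$, and checking each one; there is nothing to add.
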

\begin{corollary}\label{cor2A1sing}
Suppose $O$ is a point on a del Pezzo surface $\overline{T}$ with two $\mathbb{A}_1$ singularities and nine lines such that $\delta_O(T)\le \frac{6}{5}$ then
$$\delta_{O}(\overline{T},\overline{D})\ge
\left\{
\aligned
&\frac{15 - 3 u^2}{16 + 3 u - 9 u^2 + 2 u^3}\text{ for }u\in [1,a],\\
&\frac{15 - 3 u^2}{11 - u^3}\text{ for }u\in [a,2]
\endaligned
\right.$$
\end{corollary}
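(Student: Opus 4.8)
The plan is to reduce this corollary to Corollary~\ref{cor2A1} by passing to the minimal resolution. Write $\sigma\colon T\to\overline T$ for the minimal resolution of $\overline T$; it is crepant, so $-K_T=\sigma^*(-K_{\overline T})$ and $D=\sigma^*\overline D$, and hence, as recalled at the beginning of this appendix (taking $\overline D=-K_{\overline T}$ for the second identity),
$$
\delta_O(\overline T,\overline D)=\inf_{P\,:\,\sigma(P)=O}\delta_P(T,D)\qquad\text{and}\qquad\delta_O(\overline T)=\inf_{P\,:\,\sigma(P)=O}\delta_P(T).
$$
So it suffices to show that every $P\in T$ with $\sigma(P)=O$ lies in the locus to which Corollary~\ref{cor2A1} applies, and then to take the infimum of the bound furnished by that corollary. (Here the hypothesis written $\delta_O(T)\le 6/5$ is understood as $\delta_O(\overline T)\le 6/5$.)

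First I would read off, from the formula for $\delta_P(T)$ recorded in this subsection, that $\delta_P(T)\le 6/5$ holds precisely for $P$ in the union $E_2\cup E_3\cup E_4\cup E_5\cup L_{12}\cup L_{14}\cup L_{24}$ of the two strata on which $\delta_P(T)$ equals $1$ or $6/5$; this is exactly the locus named in Corollary~\ref{cor2A1}. Then I would split into two cases. If $O$ is a smooth point of $\overline T$, then $\sigma$ is an isomorphism over a neighbourhood of $O$, so there is a unique $P$ with $\sigma(P)=O$, and $\delta_P(T)=\delta_O(\overline T)\le 6/5$, whence $P$ lies in the locus above. If $O$ is one of the two $\mathbb{A}_1$ points, then $\sigma^{-1}(O)$ is one of the two $(-2)$-curves $E_2$, $E_4$, both of which are contained in that locus, so again every $P$ with $\sigma(P)=O$ lies in it (and in this case the hypothesis holds automatically, since $\delta_P(T)=1$ on all of $E_2$ and $E_4$). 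In either case Corollary~\ref{cor2A1} gives, for each such $P$, $\delta_P(T,D)\ge\frac{15-3u^2}{16+3u-9u^2+2u^3}$ for $u\in[1,a]$ and $\delta_P(T,D)\ge\frac{15-3u^2}{11-u^3}$ for $u\in[a,2]$; taking the infimum over $P$ yields the asserted lower bound for $\delta_O(\overline T,\overline D)$.

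Because every volume computation needed here has already been carried out in the Lemma and packaged in Corollary~\ref{cor2A1}, I do not expect a real obstacle in this corollary itself — it is a bookkeeping reduction. The only place demanding care is the derivation of Corollary~\ref{cor2A1}: there one has to check that each explicit rational function of $u$ that the Lemma attaches to the curves $E_2,E_3,E_4,E_5,L_{12},L_{14},L_{24}$, and to the special points $E_2\cap E_3$, $E_4\cap E_5$, $L_{14}\cap E_1$, dominates on each of $[1,a]$ and $[a,2]$ the target value $\min\!\bigl(\tfrac{15-3u^2}{16+3u-9u^2+2u^3},\tfrac{15-3u^2}{11-u^3}\bigr)$. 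Each such comparison reduces to the sign on $[1,2]$ of a cubic such as $3u^3-9u^2+3u+5$ — whose unique root in $[1,2]$ is exactly $a$, which is why the bound breaks there — or $(u-1)^3-2$, $u^3-3u^2-3u+11$, $2u^3-6u^2+3u+1$, $4u^3-18u^2+27u-15$, all of which are elementary to sign. It is worth recording that the stated bound is sharp: it is attained with equality on $[1,a]$ by the curves $E_2,E_4$ and on $[a,2]$ by $E_3,E_5$, so no stronger uniform bound of this shape can hold.
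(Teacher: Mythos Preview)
Your proposal is correct and follows essentially the same approach as the paper: the paper states Corollary~\ref{cor2A1sing} without proof as an immediate consequence of Corollary~\ref{cor2A1} via the crepant-resolution identity $\delta_O(\overline T,\overline D)=\inf_{\sigma(P)=O}\delta_P(T,D)$ recalled at the start of the appendix, and your reduction spells out exactly this. Your additional remarks on how Corollary~\ref{cor2A1} itself is extracted from the Lemma (comparing the cubics in $u$) and on sharpness are accurate and go slightly beyond what the paper makes explicit.
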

\subsection{Polarized $\delta$-invariant on Del Pezzo surface of degree $4$ with $\DA_2$ singularity} \label{A2}
Now, let us use the notations and assumptions of Section 2 with a minor difference: we assume that $\overline{T}$ has a singular point of type $\mathbb{A}_2$. Let us show that in the case when $O$ is the singular point of the surface $\overline{T}$ we have
$$
\delta_O(\overline{T},\overline{D})=\frac{u^3 - 6u^2 + 19}{15-3u^2}
$$
 which immediately implies that $\delta_O(\overline{T},W^{\overline{T}}_{\bullet,\bullet})\le \frac{80}{83}$. In this case, the morphism $\eta$ is a blow up of $\DP^2$ at points
 $P_1$, $P_2$, and $P_3$ in general position; after that blowing up a point $P_4$ which belongs to the exceptional divisor corresponding to $P_3$ and no other negative curve and after that a point $P_5$ which belongs to the exceptional divisor corresponding to $P_4$ and no other negative curve. By \cite[Section 6.5]{Denisova} we have:
$$\delta_P(T)=
\left\{
\aligned
&6/7\text{ if }P\in E_3\cup E_4,\\
&8/7\text{ if }P\in (L_{13}\cup L_{23}\cup L_{34}\cup E_5)\backslash (E_3\cup E_4),\\
& 4/3 \text{ if }P\in (L_{12}\cup C_2)\cap (E_1\cup E_2),\\
&  18/13\text{ if }P\in (L_{12}\cup C_2\cup E_1\cup E_2)\backslash ((L_{12}\cup C_2)\cap (E_1\cup E_2)),\\
& 3/2,\text{ otherwise }
\endaligned
\right.$$
where $E_1$, $E_2$, $E_3$, $E_4$, $E_5$ are exceptional divisors corresponding to $P_1$, $P_2$, $P_3$, $P_4$, $P_5$ respectively, $C_{2}$ is a strict transform of a $(-1)$-curve coming from the conic on $\DP^2$, $L_{ij}$ are strict transforms of the lines passing through $P_i$ and $P_j$ for $(i,j)\in\{(1,2),(1,3),(2,3)\}$ and $L_{34}$ is a strict transform of a $(-1)$-curve coming from a line passing through $P_3$  on $\DP^2$. The dual graph of $(-1)$ and $(-2)$-curves is given in the following picture:
       \begin{center}
      \includegraphics[width=12cm]{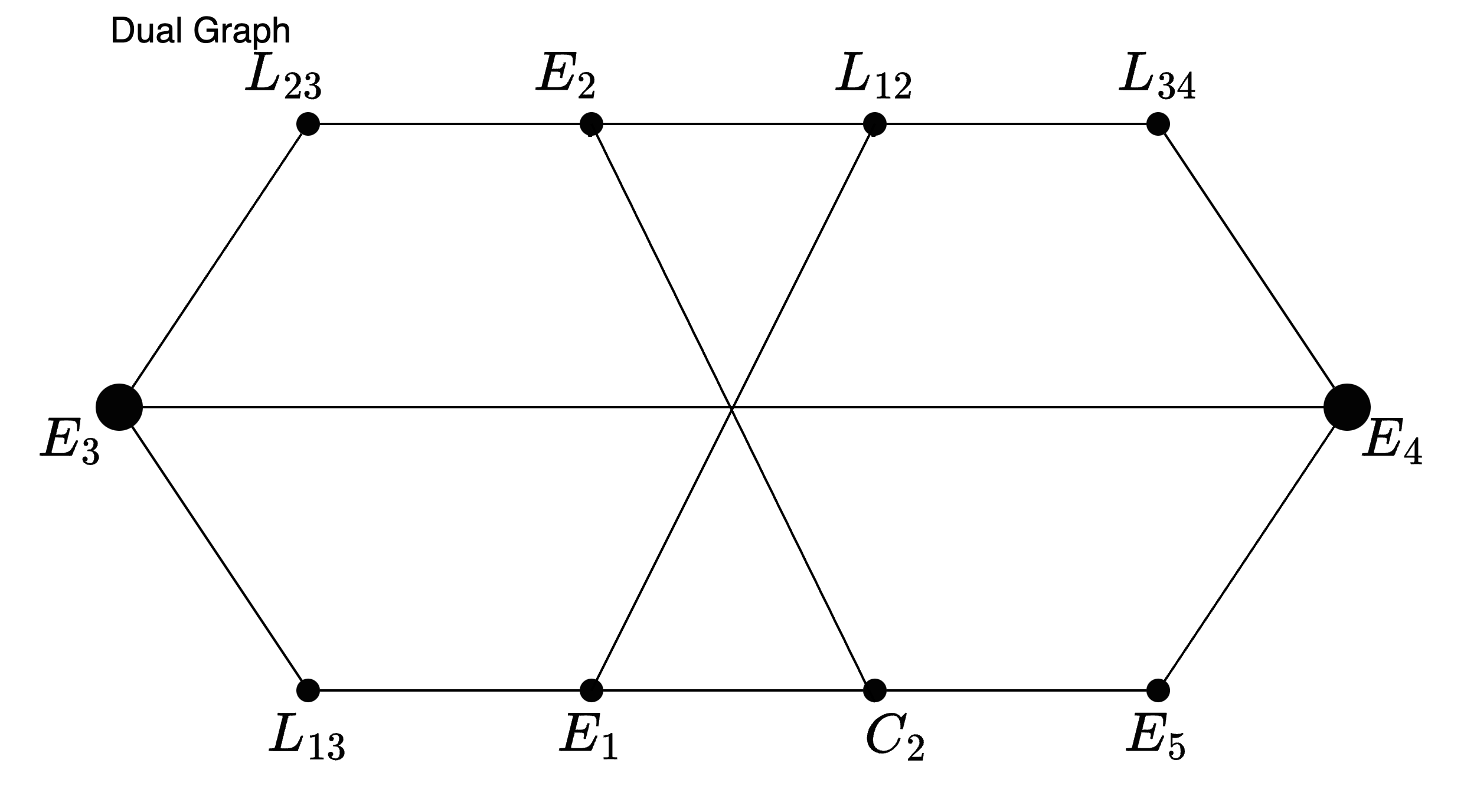}
   \end{center}
Now let's prove that:
$$\delta_{P}(T,D)= \frac{u^3 - 6u^2 + 19}{15-3u^2}\text{ for }P\in E_4\backslash (L_{34}\cup E_5)$$
\begin{proof}
Suppose $P\in E_4\backslash (L_{34}\cup E_5)$.  In this case we set $\MC= E_4$. Then $\tau(\MC)=2$. The Zariski Decomposition of the divisor $D-vE_4$ is given by:
$$P(v)=
\begin{cases}-K_{T}-(u-1)C_2-vE_4-\frac{v}{2}E_3\text{ for }v\in[0,2-u]\\
-K_{T}-(u-1)C_2-vE_4-\frac{v}{2}E_3-(u+v-2)E_5\text{ for }v\in[2-u,1]\\
-K_{T}-(u-1)C_2-vE_4-\frac{v}{2}E_3-(u+v-2)E_5-(v-1)L_{34}\text{ for }v\in[1,2]
\end{cases}$$
and 
$$N(v)=
\begin{cases}
\frac{v}{2}E_3\text{ for }v\in[0,2-u]\\
\frac{v}{2}E_3+(u+v-2)E_5\text{ for }v\in[2-u,1]\\
\frac{v}{2}E_3+(u+v-2)E_5+(v-1)L_{34}\text{ for }v\in[1,2]
\end{cases}$$
Moreover, 
$$P(v)^2=\begin{cases}5 -u^2 - 3v^2/2 \text{ for }v\in[0,2-u]\\
9 - 4u - 4v + 2uv - 1/2v^2\text{ for }v\in[2-u,1]\\
\frac{(v - 2)(v + 4u - 10)}{2}\text{ for }v\in[1,2]
\end{cases}$$\text{ and }$$P(v)\cdot \MC=\begin{cases}
3v/2\text{ for }v\in[0,2-u]\\
2-u+v/2\text{ for }v\in[2-u,1]\\
3-u-v/2\text{ for }v\in[1,2]
\end{cases}$$
Thus,
\begin{align*}
    S_D(\MC)=\frac{1}{5-u^2}\Big(\int_0^{2-u} 5 -u^2 - 3v^2/2 dv&+\int_{2-u}^1 9 - 4u - 4v + 2uv - 1/2v^2 dv+\\
    &+\int_{1}^{2} \frac{(v - 2)(v + 4u - 10)}{2} dv\Big)=\frac{19+u^3 - 6u^2}{15 - 3 u^2}
\end{align*}
Thus, $\delta_P(T,D)\le \frac{15 - 3 u^2}{19+u^3 - 6u^2}$ for $P\in E_4$. Note that for $P\in E_4\backslash (E_5\cup L_{34})$ we have:
$$h_D(v)=\begin{cases}
\frac{9v^2}{8}\text{ for }v\in[0,2-u]\\
\frac{(2-u+v/2)^2}{2}\text{ for }v\in[2-u,1]\\
\frac{(3-u-v/2)^2}{2}\text{ for }v\in[1,2]
\end{cases}$$
So we have
    \begin{align*}
    S_D(W_{\bullet,\bullet}^{\MC};P)=\frac{2}{5-u^2}\Big(\int_0^{2-u} \frac{9v^2}{8} dv&+\int_{2-u}^1 \frac{(2-u+v/2)^2}{2} dv+\int_{1}^{2} \frac{(3-u-v/2)^2}{2} dv\Big)=\\
    &=\frac{21 + 6 u - 18 u^2 + 5 u^3}{2(15 - 3 u^2)}\le \frac{19+u^3 - 6u^2}{15 - 3 u^2}
\end{align*}
So we obtain that
$$\delta_{P}(T,D)= \frac{u^3 - 6u^2 + 19}{15-3u^2}\text{ for }P\in E_4\backslash (L_{34}\cup E_5).$$
\end{proof}

\end{document}